\newtheorem{theorem}{Theorem}[section]
\newtheorem*{theorem*}{Theorem}
\newtheorem{lemma}{Lemma}[section]
\newtheorem{definition}{Definition}[section]
\newtheorem*{definition*}{Definition}
\newtheorem{remark}{Remark}[section]
\def\br{{\bm r}}
\def\bk{{\bm k}}
\def\bbm{{\bm m}}
\def\bl{{\bm l}}
\def\bn{{\bm n}}
\def\AA{{\bm A}}
\title[Optimal Evaluation of Symmetry-Adapted $n$-Correlations]{Optimal Evaluation of Symmetry-Adapted $n$-Correlations Via Recursive Contraction of Sparse Symmetric Tensors}
\author{Illia Kaliuzhnyi}
    \address{V.N.Karazin Kharkiv National University\\ 4 Svobody Square, Kharkiv, Ukraine}
    \email{ikaliuzh@gmail.com}
\author{Christoph Ortner}
    \address{University of British Columbia\\ 1984 Mathematics Road, Vancouver, BC, Canada V6T 1Z2}
    \email{ortner@math.ubc.ca}
\begin{document}
    \maketitle

    \begin{abstract}
        We present a comprehensive analysis of an algorithm for evaluating 
        high-dimensional polynomials that are invariant under permutations and rotations. The key bottleneck is the
        contraction of a high-dimensional symmetric and sparse tensor with 
        a specific sparsity pattern that is directly related to the symmetries 
        imposed on the polynomial. 
        We propose an explicit construction of a recursive evaluation strategy 
        and show that it is optimal in the limit of infinite polynomial degree.
    \end{abstract}
    
    
    \section{Introduction}
        
Throughout scientific machine learning, explicit enforcement of physical symmetries plays an important role. On the one hand, they are often a {\em requirement} in order to reproduce qualitatively correct physics (e.g., conservation laws). On the other hand, correctly exploiting symmetries can lead to a significant reduction in the number of free parameters and the computational cost of a model. 

The present work is concerned with the efficient evaluation of multi-set functions, 
\[
    p\big( [ \br_j ]_j \big), \qquad \br_j \in \mathbb{R}^d,
\]
where $[ \cdot ]$ denotes a multi-set (mset), 
which are invariant under permutations (implicit in the fact they are mset functions) and equivariant under the action of a symmetry group, typically, $O(d)$ or $SO(d)$. Such symmetries arise, e.g., when modelling properties of atomic environments such as energies, forces, charges, magnetic moments, and so forth. 
A vast variety  of closely related machine-learning frameworks exist to model such properties of particle systems~\cite{Behler2007-ng,Bartok2010-mv,Zuo2020-ba,Musil2021-nv}. Here, we are particularly interested in their representation in terms of {\em symmetry-adapted $n$-correlations} or, equivalently, symmetric polynomials. This approach was pioneered in this application domain by the PIP method \cite{Braams2009-bj}, and the moment tensor potentials \cite{Shapeev2016-pd}. 
The atomic cluster expansion (ACE) \cite{drautz-ace, 2019-ship1, 2020-pace1} and its variants \cite{Seko2019-fe,Nigam2020-pz} provide a general systematic framework for parametrizing structure property relationships of the kind described above. For the sake of simplicity of presentation we will focus only on {\em invariant} parameterizations, but this is the most stringent case and our results therefore apply also to equivariant tensors. 

Thus, we consider the efficient evaluation of (polynomial) mset functions $\varphi$ satisfying 
\[
    \varphi\big([ Q \br_j]_j \big) 
        =
    \varphi\big( [\br_j]_j \big) 
    \qquad \forall Q \in {\rm SO}(d).
\]
The most important case for practical applications is that of three-dimensional particles, i.e., $d = 3$. 
Within the ACE framework, the evaluation of $\varphi$ is eventually (we give details in \S~\ref{sec:ace:background}) reduced to an expression of the form, 
\begin{equation} \label{eq:intro:contraction}
    \varphi = \sum_{\nu = 1}^{\nu_{\rm max}}
        \sum_{\bk} c_{\bk} 
        \prod_{t = 1}^\nu A_{k_t}
\end{equation}
where the $A_{k_t}$ contain features about the particle structures and $\prod_{t = 1}^\nu A_{k_t}$ represent a $\nu$-correlation of the density $\rho = \sum_j \delta(\br - \br_j)$ and can be understood as a {\em basis} in which the property $\varphi$ is expanded with coefficients $c_\bk$. While our own interest originates in modelling atomic properties, our algorithms and results are more generally of interest whenever a large set of $\nu$-correlations is computed that has a structured sparsity induced by a symmetry group. 

For the purpose of implementating \eqref{eq:intro:contraction} it is more natural to interpret ${\bm c} = (c_\bk)_{\bk}$ as a high-dimensional tensor which is contracted against the one-dimensional tensor $(A_k)_k$ in each coordinate direction. 
Sparsity in the tensor ${\bm c}$ arises in three distinct ways: (1) due to permutation-symmetry only ordered tuples $\bk = (k_1, \dots, k_\nu)$ must be considered; (2) a sparse basis is chosen to enable efficient high-dimensional approximation; and (3) the tensor $c_{\bk}$ has additional zeros which arise due to the $O(d)$-invariance and therefore does not represent a downset on the lattice of $\bk$ multi-indices. 

The fact that the set of non-zero indices $\bk$ is not a downset is particularly significant.
Although the naive evaluation cost of \eqref{eq:intro:contraction} only scales linearly with $\nu_{\rm max}$, it still becomes prohibitive for high correlation order $\nu_{\rm max}$. 
In \cite{2020-pace1, 2019-ship1} a heuristic was proposed to evaluate \eqref{eq:intro:contraction} recursively which appeared to significantly lower the computational cost on a limited range of model problems. However, the recursion strategy is not unique, and no indication was given whether the specific choice made in \cite{2020-pace1} is guaranteed to improve the computational cost, let alone be close to optimal. 
In the present work we will fill these gaps by showing that the strategy of \cite{2020-pace1, 2019-ship1} is indeed quasi-optimal, as well as presenting a new explicit rule to construct the recursion which is asymptotically optimal in the limit of large polynomial degree.

\section{The ACE Model}
\subsection{Background: Many-body ACE Expansion}
\label{sec:ace:background}
A (local) configuration of identical non-colliding particles is described by a set $[ \br_j ]_{j = 1}^J \subset \mathbb{R}^d$. The qualifier {\em local} is used to indicate that the positions $\br_j$ are normally relative positions with respect to some central particle. We are concerned with the parametrization of invariant properties of such configurations, i.e., mappings $\varphi$ that are invariant under a group action, 
\begin{equation*}
    \varphi\big( [Q \br_j]_j \big) = 
    \varphi\big( [\br_j]_j \big)  \qquad \forall Q \in G,
\end{equation*}
where $G$ is an orthgogonal group, in our case we will consider $G = O(3)$, $G = SO(2)$

A natural approach to parametrize such properties is the many-body expansion (or, high-dimensional model reduction, HDMR), where $\varphi$ is approximated by
\[
    \sum_{\nu = 1}^{\nu_{\rm max}} 
    \sum_{j_1 < \dots < j_\nu} \varphi^{(\nu)}(\br_{j_1}, \dots, \br_{j_\nu}). 
\]
The computational cost of the inner summation over all $\nu$-clusters scales combinatorially, and quickly becomes prohibitive. In particular, the majority of models we are aware of truncate the expansion at $\nu_{\rm max} = 3$. 

The atomic cluster expansion (ACE) \cite{drautz-ace} can be thought of as a mechanism to represent such a many-body expansion in a computationally efficient way. Here, we give only an outline and refer to \cite{drautz-ace,2019-ship1,Musil2021-nv} for further details. 
Briefly, the idea is to replace the summation $\sum_{j_1 < \dots < j_\nu}$ over a discrete simplex with a summation over a tensor product set, 
\[
    \varphi([\br_j]_j) = 
    \sum_{\nu = 1}^{\nu_{\rm max}}
    \sum_{j_1, \dots, j_\nu} 
    u^{(\nu)}(\br_{j_1}, \dots, \br_{j_\nu}). 
\]
Next, we parametrize $u^{(\nu)}$ by a tensor product basis, 
\[
    u^{(\nu)}(r_{j_1}, \dots, r_{j_\nu}) 
    = \sum_{k_1, \dots, k_\nu} c_\bk\prod_{t = 1}^\nu \phi_{k_t}(\br_{j_t}),
\]
where $\phi_k(r)$, indexed by a symbol $k$ that could represent a multi-index, is called the {\em one-particle basis}, and we will say more about how this basis is chosen below. 
Inserting this expansion and reordering the summation we arrive at 
\begin{align*}
    \varphi\big([\br_j]_{j=1}^J\big) 
    = 
    \sum_{\nu = 1}^{\nu_{\rm max}} 
    \sum_{k_1, \dots, k_\nu}
    c_\bk 
    \sum_{j_1, \dots, j_\nu} \prod_{t = 1}^\nu \phi_{k_t}(r_{j_t}) 
    \\ 
    = 
    \sum_{\nu = 1}^{\nu_{\rm max}} 
    \sum_{k_1, \dots, k_\nu} 
    c_\bk 
    \prod_{t = 1}^\nu
    \sum_{j = 1}^J \phi_{k_t}(\br_j).
\end{align*}

Absorbing the sum over the {\em correlation order} $\nu$ into $\sum_{k_1, \dots}$ we obtain the parametrization 
\begin{equation} \label{eq:ACE_param}
    \begin{split}
    \varphi\big([\br_j]_{j=1}^J\big) 
    &= 
    \sum_{\bk \in \mathcal{K}} c_\bk \AA_\bk \\ 
    \AA_\bk &:= \prod_{t = 1}^{\nu(\bk)} A_{k_t}, \\ 
    A_k &:= \sum_{j = 1}^J \phi_k(\br_j),
    \end{split}
\end{equation}
where $\mathcal{K}$ is a set of tuples specifying which basis functions $\AA_\bk$ are used in the parametrization, and $\nu(\bk)$ is the correlation order of that basis function (i.e. the length of the tuple $\bk$).
It is shown rigorously in \cite{2019-ship1, BachDussOrt2021} under natural assumptions on the one-particle basis, that in the limit of infinite correlation order and infinite basis size $\mathcal{K}$ this expansion can represent an arbitrary regular set function $\varphi$.

\subsection{\texorpdfstring{$SO(2)$-}{}Invariance}
\label{sec:SO2}
If the particle system is two-dimensional, $d = 2$, then we describe particle positions in radial coordinates, $\br = r (\cos\theta, \sin\theta)$ and choose as single particle basis functions 
\[
    \phi_k(\br) \equiv \phi_{mn}(\br) := R_n(r)e^{-i\theta m}, 
\]
where we have identified $k \equiv (m, n)$, introduced a radial basis $R_n$ and used trigonometric polynomials to discretise the angular component. 
It is then straightforward to see that 
\[
    {\int\hspace{-1.3em}-\!\!-}
        \AA_{\bbm \bn}\big( \big[ e^{i \theta} \br_j \big]_j \big) \, d\theta 
    = 
    \begin{cases} 
        \AA\big( \big[ \br_j \big]_j \big), & \sum_t m_t = 0, \\ 
        0, & \text{otherwise}.
    \end{cases}
\]
This implies that the subset of basis functions $\AA_{\bbm \bn}$ for which $\sum_t m_t = 0$ constitute a rotation-invariant basis. Thus we define the set of all {\em invariant} basis functions, represented by their multi-indices, 
\[
    \mathcal{K}^{inv}_{O(2)} := \big\{
        \bk = [(n^t, m^t)]_{t = 1}^\nu
        \,|\,
        k^t \in \mathbb{Z}_+ \times \mathbb{Z}, \,
        \nu \in \mathbb{N},\,
        {\textstyle \sum_{t = 1}^\nu m^t = 0} 
    \big\}.
\]
Full $O(2)$-invariance (reflections) can be obtained by simply taking the real part of the basis, i.e., replacing $\AA_\bk$ with ${\rm real}(\AA_\bk)$, hence we ignore this additional step and focus on $SO(2)$ invariance.

\begin{remark}
    $O(2)$ and $SO(2)$ invariance naturally occurs also in the three-dimensional setting when considering a cylindrical coordinate system, e.g., when modelling properties of bonds. In that case one would obtain a product symmetry group $O(2) \otimes O(1)$ where $O(1)$ is associated with the reflection in the $z$-coordinate.
\end{remark}


\subsection{The one-dimensional torus: \texorpdfstring{$\mathbb{T}$}{T}}
\label{sec:TT}
The simplest non-trivial case we consider is to let all the particles lie on the unit circle. 
The particle positions are now described simply by their angular component $\br_j \equiv e^{i \theta_j}$, and we can ignore the radial component $r_j$ and hence the radial basis $R_n$. The one-particle basis is then given simply by $\phi_m(e^{i\theta}) := e^{i \theta m}$. 
The reason this case is of particular interest is that the main challenge in the construction and analysis of the recursive evaluator occur due to the $m$ components. With a tensor product decomposition for $\phi_{nm}$ it will be straightforward to extend our results to that case. The symmetry group $SO(2)$ can now be identified with the torus itself, hence we denote it by $\mathbb{T}$. 
Here the set of invariant basis functions is represented by the tuples 
\[
    \mathcal{K}^{\rm inv}_{\mathbb{T}} := 
    \big\{ 
        \bbm = [m^t]_{t=1}^{\nu} \,\big|\, m^t \in \mathbb{Z},\, \nu \in \mathbb{N}, 
                {\textstyle \sum_{t = 1}^\nu m^t = 0} 
    \big\}.
\]

\subsection{\texorpdfstring{$O(3)$}{O(3)}-Invariance}
\label{sec:O3}
To incorporate rotation-invariance into the parametrization \eqref{eq:ACE_param} when $d = 3$ we identify $k \equiv (nlm)$, and choose  the one-particle basis 
\[
    \phi_k(\br) := \phi_{nlm}(\br) := R_n(r) Y_l^m(\hat{\br}), 
\]
where $Y_l^m$ are the standard complex spherical harmonics. 
By exploiting standard properties of the spherical harmonics it is possible to enforce rotation-invariance as an explicit constraint on the parameters $c_\bk$ in \eqref{eq:ACE_param}. In practice one actually first constructs a second rotation-invariant basis and then converts the parametrization to \eqref{eq:ACE_param} for faster evaluation~\cite{2019-ship1,2020-pace1}.

The details are unimportant for our purposes, except for one fact: a parameter $c_\bk$ can only be non-zero if it belongs to the set
\[
    \mathcal{K}^{\rm inv}_{O(3)} := 
    \Big\{ 
        \bk = [(n^t, l^t, m^t)]_{t = 1}^\nu 
        \,\Big|\, 
         k^t \in \mathbb{Z}_+ \times \mathbb{Z}_+ \times \mathbb{Z}, \,
        \nu \in \mathbb{N}, \,
        \sum_t m_t = 0, \,
         \sum_t l_t \text{ is even}
     \Big\}.
\]
Thus, we arrive at a very similar structure for the set $\mathcal{K}^{\rm inv}_{O(3)}$ as in the $SO(2)$-invariant case. The additional constraint that $\sum_t l_t$ must be even, arises from inversion symmetry. 

\subsection{Sparse polynomials}
\label{sec:sparse_polys}
In all of the three representative cases, $\mathbb{T}$, $SO(2)$, $O(3)$, we arrived at the situation that the representation \eqref{eq:ACE_param} only involves basis functions $\AA_\bk$ from a strict subset $\mathcal{K}^{\rm inv}$ of all possible tuples $\bk$. For practical implementations we must further reduce this infinite index set to a finite set, thus specifying a concrete finite representation. 

When the target function $\varphi$ we are trying to approximate through our parameterisation is analytic, in the sense that the $\nu$-body components $u^{(\nu)}$ are analytic, then approximation theory results \cite{BachDussOrt2021} suggest that we should use a {\em total degree sparse grid} (or, simply, sparse grid) of basis functions. This will be the main focus of our analysis, but for numerical tests we consider the more general class 
\begin{equation}  \label{eq:sparsegrid}
    \mathcal{K}_G^p(D) := \big\{ \bk \in \mathcal{K}^{\rm inv}_G : \| \bk \|_p \leq D \big\}, 
\end{equation}
where $D > 0$ is the {\em degree} and $p > 0$ a parameter that specifies how the degree of a basis function $\AA_\bk$, is calculated. In the three cases we introduced above, the degree, $\|\bk\|_p$, is defined as 
\[
    \begin{cases}
    \| \bk \|_p
    = 
    \| ({\bf nlm}) \|_p
    = 
    \Big(\sum_t \big( n_t + l_t \big)^p\Big)^\frac{1}{p}, & \text{ case } O(3), \\ 
    \| \bk \|_p
    = 
    \| ({\bf nm}) \|_p
    = 
    \Big(\sum_t \big( n_t + |m_t| \big)^p\Big)^\frac{1}{p}, & \text{ case } SO(2), \\ 
    \| \bk \|_p
    = 
    \| {\bf m} \|_p
    = 
    \Big( \sum_t m_t^p \Big)^\frac{1}{p}, & \text{ case } \mathbb{T}, 
    \end{cases}
\] 
Note that $|m_t| \leq l_t$ hence it does not appear in this definition for $d = 3$.
The total degree is obtained for $p = 1$. 
In the context of the ACE model, this choice has been proposed and used with considerable success in \cite{2019-ship1,2020-pace1, 2021-acetb1}.

\subsection{Recursive Evaluation of the Density Correlations}
Aside from the choice of radial basis $R_n$ (which is not essential to the present work) we have now fully specified the basis for the ACE parameterisation \eqref{eq:ACE_param}. Typical basis sizes range from 1,000 to 100,000 in common regression tasks. It has been observed in \cite{2019-ship1,2020-pace1} that at least for larger models the product basis evaluation as a products of one-particle basis functions is the computational bottleneck. Our task now is to evaluate the basis (and hence the model $\varphi$) as efficiently as possible. Towards that end, a recursive scheme was proposed in \cite{2019-ship1,2020-pace1}, in which high correlation order functions are computed as a product of two lower order ones.


Consider a basis (multi-) index $\bk := (\bn \bl \bbm)$, (e.g. $k^t = (n^t, l^t, m^t)$, if $G = O(3)$. We say that  $\bk = [k^t]_{t = 1}^{\nu_1 + \nu_2}$ has a decomposition $\bk = [\bk_1, \bk_2]$ where $\bk_i = [k_i^t]_{t = 1}^{\nu_i}$ if $\bk$ is the mset-union of $\bk_1, \bk_2$, i.e., 
\[
    [k^1, \dots, k^\nu] = 
    [k_1^1, \dots k_1^{\nu_1}, k_2^1, \dots, k_2^{\nu_2}],     
\]
This is equivalent to the decomposition of the basis function,   
\begin{equation}
    \AA_{\bk} = \AA_{\bk_1} \cdot \AA_{\bk_2}.
\end{equation}
$\AA_{\bk}$ can then be computed with a single product, provided of course that $\bk_1, \bk_2 \in\mathcal{K}$ as well.


The crux is that $\bk$ as well as $\bk_1, \bk_2$ must satisfy the mentioned symmetry constraints reviewed in \S\ref{sec:SO2}, \S\ref{sec:TT}, \S\ref{sec:O3}; most notably $\sum_t m_t = 0$.  For the purpose of illustration consider only an $m$-channel, i.e., the torus case $G = \mathbb{T}$. For example, $\bk = [1, 0, -1]$ can clearly be decomposed into $\bbm_1 = [1, -1]$ and $\bbm_2 = [0]$ and, therefore, $\AA_{1, 0, -1} = A_{0} \cdot \AA_{1, -1}$. But some other basis functions do not have a {\em proper} decomposition: for instance, the tuple $\bbm = [1, 1, -2]$ cannot be decomposed. 
We will call such functions {\em independent}, due to the fact that these are precisely the algebraically independent basis functions that cannot be written as polynomials of lower-correlation order terms. 

\begin{definition}
    Let $\bk \in \mathcal{K}^{\rm inv}$, then we call $\bk$ {\em dependent}, if there exist $\bk_1, \bk_2 \in \mathcal{K}^{\rm inv}$ such that $\bk = [\bk_1, \bk_2]$. Otherwise we say that $\bk$ is {\em independent.}
\end{definition}

To decompose independent functions we add auxiliary basis functions that are not invariant under the above mentioned symmetries and therefore do not occur in the expansion \eqref{eq:ACE_param}. For example, $\bbm = [1, 1, -2]$ could be decomposed into $[1,1]$ and $[-2]$ where the latter is simply an element of the atomic base $A_{-2}$. That is, only a single auxiliary basis function $\AA_{1,1}$ is required. Auxiliary basis functions $\AA_\bk$ are simply assigned a zero parameter $c_\bk$ in the expansion \eqref{eq:ACE_param}.

These ideas result in a directed acyclic computational graph 
\[
    \mathcal{G} = \{ \bk \equiv [\bk_1, \bk_2] \}
\]
with each node $\bk$ representing a basis function. The graph determines in which order basis functions need to be evaluated to comply with the recursive scheme. So,
\begin{equation}
    e: \bk_1 \to \bk \in \text{Edges}(\mathcal{G}) \implies \exists \bk_2 \in \text{Vertices}(\mathcal{G}):  A_{ \bk} = A_{ \bk_1} \cdot A_{\bk_2}  
\end{equation}
It means that every node of correlation order $\geq 2$ has exactly two incoming edges and possibly zero or more outcoming ones. 
To construct $\mathcal{G}$ we first insert the nodes $\{\bk = [k]\}$ corresponding to the 1-correlation basis functions $\{A_{k}\}$ and further nodes in increasing order of correlation. The crucial challenge then is to find an insertion algorithm that minimizes the number of auxiliary nodes inserted into the graph and thus optimizes the overall computational cost. 
        
    \section{Summary of Main Results}
        Our construction and analysis of node insertion algorithms naturally relies on traversing by increasing correlation order. Therefore, we begin by defining 
\[
    \mathcal{K}_G(\nu, D) := \big\{ \bk \in \mathcal{K}^{\rm inv}_G \, \big|\, 
                                    \| \bk \|_1 \leq D, \nu(\bk) = \nu \big\},
\]
for each of the groups $G = \mathbb{T}, SO(2), O(3)$. Written out concretely, these sets are given by 
\begin{align*}
    \mathcal{K}_{\mathbb{T}}(\nu, D) &:= 
    \Big\{ \bk = \bbm \,\big|\, m^t  \in \mathbb{Z}, \, {\textstyle \sum_{t = 1}^{\nu} m^t = 0}, \, 
                {\textstyle \sum_{t = 1}^\nu |m^t| \leq D} \Big\}, \\
    \mathcal{K}_{SO(2)}(\nu, D) &:= 
    \Big\{ \bk = [ (n^t, m^t)]_{t = 1}^\nu \,\big|\,
    {\textstyle k^t \in \mathbb{Z}_+ \times \mathbb{Z}}, \,
    {\textstyle \sum_{t = 1}^{\nu} m^t = 0}, 
    {\textstyle \sum_{t = 1}^\nu n^t + |m^t| \leq D} \, \Big\}, \\
    \mathcal{K}_{O(3)}(\nu, D) &:= \Big\{  \bk = \big[(m^t, l^t, n^t)\big]_{t = 1}^{\nu} \,\Big|\,
    {\textstyle k^t \in \mathbb{Z}_+ \times \mathbb{Z}_+ \times \mathbb{Z}}, \,
    {\textstyle \sum_{t = 1}^{\nu} m^t = 0}, \quad {\textstyle \sum_{t=1}^{\nu}l^t} \text{ is even}, \nonumber\\
     & \hspace{7.7cm} {\textstyle |m^t| \leq l^t, \, \sum_{t = 1}^\nu n^t + l^t \leq D} \Big\}. \nonumber
\end{align*}
\subsection{Most Basis Functions Are Independent}
The subsets of {\em dependent} and {\em independent} basis functions are, respectively, defined by 
\begin{equation} \nonumber
    \begin{split}
    \mathcal{D}_{G}(\nu, D) 
    &:= 
    \big\{ \bk \in \mathcal{K}_{G}(\nu, D)\, \big| \, \bk \text{ is dependent} \big\}, \\
    \mathcal{I}_G(\nu, D) &:= 
    \big\{ \bk \in \mathcal{K}_{G}(\nu, D)\, \big| \, \bk \text{ is independent} \big\}.
    \end{split}
\end{equation}
We will denote auxiliary basis functions, which are needed for evaluation in compliance with a recursive scheme, of correlation order no more than $\nu$ and degree no more than $D$ by $\mathcal{A}_{G}(\nu, D)$. Note, that $\mathcal{A}_{G}$ depends on a particular insertion algorithm we use. The total number of basis functions (or, vertices in the computational graph) therefore becomes 
\[
    \mathcal{V}_{G}(\bar\nu, D) := \mathcal{A}_{G}(\bar\nu, D) \cup \bigcup_{\nu=1}^{\bar\nu} \mathcal{K}_{G}(\nu, D).
\]
We will prove in \S~\ref{sec:proofs} the intuitive statements that, for $\nu < \bar\nu$,  $\# \mathcal{K}_{G}(\nu, D) = \overline{o}\big(\# \mathcal{K}_{G}(\bar\nu, D)\big)$ as $D \to \infty$, and $\# \mathcal{D}_{G}(\nu, D) = \overline{o}\big(\# \mathcal{D}_{G}(\bar\nu, D)\big)$. That is, asymptotically, nodes of the highest correlation order constitute the prevailing majority of nodes in the computational graph. In particular this means that 
\[
    \#\mathcal{V}_{G}(\nu, D) = \Theta\big(\#\mathcal{K}_{G}(\nu, D) + \#\mathcal{A}_{G}(\nu, D)\big), \qquad \text{as } D \to \infty.
\]
After these preparations, we obtain the following result which states that asymptotically the vast majority of nodes are independent. This result is numerically confirmed in Figure \ref{fig:depT} for the $\mathbb{T}$  case.

\begin{theorem}[Independent nodes prevalence]
    Let $\nu \geq 3$ and $G \in \{\mathbb{T}, SO(2), O(3)\}$, then 
    \label{results:th:3dasymp}
    \[
        \frac{\text{\#}\mathcal{D}_{G}(\nu, D)}{\text{\#}\mathcal{K}_{G}(\nu, D)} = \Theta\left(\frac{1}{D}\right), \qquad \text{as } D \to \infty.
    \]
\end{theorem}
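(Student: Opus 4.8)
The plan is to estimate $\#\mathcal{K}_G(\nu,D)$ and $\#\mathcal{D}_G(\nu,D)$ separately and show that their leading-order growth rates in $D$ differ by exactly one power of $D$. For the denominator, I would first reduce to a counting problem for lattice points: in each case $\mathcal{K}_G(\nu,D)$ is the set of $\nu$-tuples of one-particle indices lying in a fixed convex body scaled by $D$, intersected with the hyperplane $\sum_t m^t = 0$ (and, for $O(3)$, the parity coset $\sum_t l^t$ even, and the constraints $|m^t|\le l^t$), and further quotiented by the symmetric group $S_\nu$ acting by permutation of the $\nu$ slots. Standard Ehrhart-type asymptotics give that the number of such lattice points grows like $c_G(\nu)\,D^{\dim - 1}$ where $\dim$ is the dimension of the unconstrained tuple space ($\nu$ for $\mathbb{T}$, $2\nu$ for $SO(2)$, $3\nu$ for $O(3)$), the $-1$ coming from the single linear constraint $\sum m^t = 0$; the parity constraint and the $S_\nu$-quotient only change the constant $c_G(\nu)$, not the exponent. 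So $\#\mathcal{K}_G(\nu,D) = \Theta(D^{e_G(\nu)})$ with $e_G(\nu)$ an explicit exponent.

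For the numerator I would characterize dependent tuples combinatorially: $\bk$ is dependent iff the mset of its $m$-labels can be split into two nonempty sub-msets each summing to $0$ — equivalently, iff some nonempty proper sub-mset of $[m^1,\dots,m^\nu]$ already sums to zero (the complementary part then automatically does too), subject to each part still lying in $\mathcal{K}^{\rm inv}_G$, which for $\mathbb{T}$/$SO(2)$ is automatic once the $m$-sum vanishes and for $O(3)$ additionally requires the split to respect the evenness of the two $\sum l^t$ — but since the total is even, both parts have the same $l$-parity, so this is a mild extra condition. The key structural observation is that forcing a proper sub-mset to sum to zero imposes (at least) one additional independent linear equality beyond $\sum_{t=1}^\nu m^t = 0$. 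I would make this precise by a union bound: $\mathcal{D}_G(\nu,D)$ is contained in the union, over all ways $S\subsetneq\{1,\dots,\nu\}$ of splitting the index positions and over the finitely many admissible parity/size types, of the set of tuples satisfying both $\sum_{t\in S} m^t = 0$ and $\sum_{t=1}^\nu m^t = 0$. Each such set is lattice points in the same convex body cut by \emph{two} independent hyperplanes, hence has cardinality $O(D^{e_G(\nu)-1})$; summing over the $O(1)$ (in $D$) splitting patterns preserves this bound, giving $\#\mathcal{D}_G(\nu,D) = O(D^{e_G(\nu)-1})$, i.e. the ratio is $O(1/D)$.

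The matching lower bound $\#\mathcal{D}_G(\nu,D) = \Omega(D^{e_G(\nu)-1})$ is where I expect the real work to lie, and it is why $\nu \ge 3$ is needed. I would exhibit an explicit $(e_G(\nu)-1)$-dimensional family of genuinely dependent tuples: fix one coordinate block to a small ``zero-sum pair'' such as $m$-labels $[a,-a]$ (with the accompanying $n,l$ chosen minimal and of correct parity) — this consumes two of the $\nu$ slots and is possible precisely because $\nu \ge 3$ leaves at least one further slot — and let the remaining $\nu - 2 \ge 1$ slots range freely over $\mathcal{K}_G(\nu-2, D - O(1))$, which by the denominator estimate applied at order $\nu-2$ has $\Theta(D^{e_G(\nu-2)})$ elements; one checks $e_G(\nu-2) \ge e_G(\nu) - 1$ actually with room to spare, and in fact the cleaner route is to freeze only the $m$-values as $[a,-a,0,\dots]$ while letting \emph{all} remaining index components vary, which directly produces $\Omega(D^{e_G(\nu)-1})$ dependent tuples. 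The subtlety to watch is double counting under the $S_\nu$-symmetrization and ensuring the constructed tuples are not accidentally also hit by the upper-bound slices in a way that collapses the count — but since we only need a lower bound, it suffices to count a single canonical sub-family injectively, e.g. tuples whose sorted $m$-multiset contains a prescribed value $a$ together with $-a$; the remaining combinatorics is routine. Combining the two bounds yields the claimed $\Theta(1/D)$, uniformly in (fixed) $\nu \ge 3$ and across the three groups, with only the hidden constants depending on $\nu$ and $G$.
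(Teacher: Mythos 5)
Your overall strategy --- bound $\#\mathcal{D}_G(\nu,D)$ from above by a union over splitting patterns, each of which imposes one extra independent linear constraint on the $m$-channel and hence costs a factor of $D$, then match this with an explicit family of dependent tuples --- is the same in spirit as the paper's, which carries out both counts by explicit integer-partition/slice asymptotics (Riemann sums of $\sum_k k^{\alpha}(n-k)^{\beta}$) rather than Ehrhart-type arguments. The upper-bound half of your argument is sound.

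The gap is in the lower bound, and it is quantitative, not cosmetic. With your exponents $e_{\mathbb{T}}(\nu)=\nu-1$, $e_{SO(2)}(\nu)=2\nu-1$, $e_{O(3)}(\nu)=3\nu-1$, the inequality you claim to check, $e_G(\nu-2)\ge e_G(\nu)-1$, is false in every case: $e_G(\nu-2)$ equals $e_G(\nu)-2$, $e_G(\nu)-4$, $e_G(\nu)-6$ for $\mathbb{T}$, $SO(2)$, $O(3)$ respectively. Hence your first family (a pair with $m$-labels $[a,-a]$ and \emph{minimal} accompanying $l,n$, times $\mathcal{K}_G(\nu-2,D-O(1))$) has cardinality $\Theta\big(D^{1+e_G(\nu-2)}\big)$, which matches the required $D^{e_G(\nu)-1}$ only for $G=\mathbb{T}$; for $SO(2)$ and $O(3)$ it falls short by factors $D^{2}$ and $D^{4}$. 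Your ``cleaner route'' (freeze the whole $m$-multiset to $[a,-a,0,\dots,0]$) is worse: it discards the $\Theta(M^{\nu-2})$ of freedom carried by the $m$-channel itself and yields only $\Theta(D^{2\nu+1})$ tuples for $O(3)$ (and only $\Theta(D)$ for $\mathbb{T}$), which suffices only when $\nu=3$. The repair is to freeze as little as possible: split off a \emph{single} slot with $m=0$ (and, for $O(3)$, even $l$) and let every other index of every slot vary, e.g. $\big\{[(0,l,n)]\cup\bk' : l \text{ even},\ \bk'\in\mathcal{K}_{O(3)}(\nu-1,D-l-n)\big\}\subseteq\mathcal{D}_{O(3)}(\nu,D)$, of size $\Theta\big(D^{2}\cdot D^{3(\nu-1)-1}\big)=\Theta(D^{3\nu-2})$. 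This is precisely the paper's lower bound in the $\mathbb{T}$ case ($[0,\bbm]$ with $\bbm\in\mathcal{K}_{\mathbb{T}}(\nu-1,D)$); for $SO(2)$ and $O(3)$ the paper instead computes the full leading-order asymptotics of both $\#\widehat{\mathcal{K}}$ and $\#\widehat{\mathcal{D}}$. One further small point: your claim that the $l$-parity condition on a split is automatic ``since both parts have the same parity'' is only half right --- both parts may be simultaneously odd, in which case that split is inadmissible; for the lower bound the parity must be built into the construction explicitly (as above), at the cost of a constant factor.
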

This result highlights how important it is to design the insertion of auxiliary nodes in the recursive evaluation algorithm with great care so as not to add too many auxiliary nodes. 

\subsection{Original Insertion Heuristic}
Algorithm~\ref{alg:original} below is a formal and detailed specification of the heuristic proposed in \cite{2020-pace1}. The key step is line 11: if a new basis function $\bk = [k^1, \dots, k^t]$ cannot be split into $\bk_1, \bk_2$ that are already present in the graph, then we simply ``split off'' the highest-degree one-particle basis function. The idea is that the remaining $(\nu-1)$-order basis function will typically have relatively low degree and there are therefore fewer of such basis functions to be added into the graph.

\begin{algorithm} 
    \SetAlgoLined
    \DontPrintSemicolon
    \caption{Recursively insert node $\bk$ into graph $\mathcal{G }$ as proposed in \cite{2020-pace1}, 
    \label{alg:original}}
    
    \SetKwProg{Fn}{Function}{:}{}
    \Fn{Insert($\bk :: \text{node}$)}{
        \If{$\bk \in \mathcal{G}$}{
            \Return
        }
        \For{all decompositions $\bk \equiv [\bk_1, \bk_2]$}{
            \If{$\bk_1, \bk_2 \in \mathcal{G}$}{
                $\mathcal{G} \leftarrow \mathcal{G} \cup \{\bk \equiv [\bk_1, \bk_2]\}$  \\ 
                \Return
            }
        }
        Identify $\bk \equiv [  [k^1] , \bk']$ where $\|k^1\|_p = \max(\|k^1\|_p, \|k^2\|_p, \dots)$ \\
        \textit{Insert}($\bk'$) \tcp*{Recursively insert $\bk'$; $[k^1]$ always belongs to $\mathcal{G}$} 
        \textit{Insert}($\bk$) \tcp*{now $\bk$ can be inserted as well} 
        \Return
    }
\end{algorithm}

It turns out that this fairly naive heuristic is already close to optimal, which is the first main result of this paper.

\begin{theorem}[Complexity of Algorithm \ref{alg:original}] Suppose that $\nu \geq 3$ is held fixed.
    \label{theorem-alg1}
    {\it (i) } If $G = SO(2)$ or $G = O(3)$ then the number of auxiliary nodes inserted by Algorithm \ref{alg:original} behaves asymptotically as 
    \[
        \frac{\#\mathcal{A}_{SO(2)}(\nu, D)}{\#\mathcal{V}_{SO(2)}(\nu, D)} = O\left(\frac{1}{D}\right); \qquad
                \frac{\#\mathcal{A}_{O(3)}(\nu, D)}{\#\mathcal{V}_{O(3)}(\nu, D)} = O\left(\frac{1}{D^2}\right).
    \]

    {\it (ii)} If $G = \mathbb{T}$, then Algorithm \ref{alg:original} inserts exactly 
    \[
        \#\mathcal{A}_{\mathbb{T}}(\nu, D) = 2\sum_{k = 2}^{\nu - 1}\sum_{n = 1}^{\lfloor D/2 \rfloor} \pi(k, n)
    \] 
    auxiliary nodes, where $\pi(k, n)$ is the number of integer partitions of $n$ into exactly $k$ parts. Moreover, 
    \[
                \lim_{D \to \infty} \frac{\#\mathcal{A}_{\mathbb{T}}(\nu, D)}{\#\mathcal{V}_{\mathbb{T}}(\nu, D)} = \frac{1}{1 + C_{\nu - 1} / 2}, \qquad \text{where } \quad C_{\nu -1} = \frac{1}{\nu - 1}\binom{2\nu - 2}{\nu - 2},
    \]
    i.e., $C_{\nu - 1}$ is the $(\nu - 1)$th Catalan number.
\end{theorem}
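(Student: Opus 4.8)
The plan is to give the proof of part~(ii) for $G=\mathbb{T}$ in full — that case carries all the combinatorics — and to obtain part~(i) for $SO(2),O(3)$ by the same mechanism but tracking only orders of magnitude. Write $M:=\lfloor D/2\rfloor$; since $\sum_t|m^t|$ is always even for an invariant tuple, the degree constraint is $\sum_{m^t>0}m^t\le M$. The first step is to reduce everything to independent invariant nodes: a \emph{dependent} invariant node $\bbm=[\bbm_1,\bbm_2]$ never forces an auxiliary insertion, because $\bbm_1,\bbm_2$ are invariant, of strictly smaller correlation order, and of degree $\le D$, so they are already vertices of $\mathcal{G}$ when $\bbm$ is processed and line~6 of Algorithm~\ref{alg:original} fires. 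Hence $\mathcal{A}_{\mathbb{T}}(\nu,D)$ is generated entirely by independent invariant nodes; moreover such a node of order $\ge2$ has no zero entry, and its positive part $P$ and negative part $-N$ satisfy $\sum P=\sum N=:n\le M$ with $1\le|P|,|N|\le\nu-1$.

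The heart of the argument is the structural lemma $\mathcal{A}_{\mathbb{T}}(\nu,D)=\mathcal{A}^\ast$, where $\mathcal{A}^\ast$ is the set of tuples $[a_1,\dots,a_k]$ with $2\le k\le\nu-1$, all $a_i$ of one sign, and $\sum_i|a_i|\le M$. For "$\subseteq$" I would induct on the order in which invariant nodes are processed (which is by increasing correlation order), the hypothesis being that just before processing any invariant node the vertices of $\mathcal{G}$ are the one-particle leaves, the previously processed invariant nodes, and a subset of $\mathcal{A}^\ast$. In the inductive step for an independent node $\bbm$ with parts $P,-N$: if $|P|,|N|\ge2$ then $P$ and $-N$ lie in $\mathcal{A}^\ast$ and have order $\le\nu(\bbm)-2$, hence were already inserted while processing strictly lower correlation order, so line~6 fires on $\bbm=[P,-N]$ and nothing is added; if $|N|=1$ (the case $|P|=1$ is the mirror image), the unique largest one-particle factor is $-n$, and any decomposition that mixes signs has a non-invariant, non-leaf, mixed-sign component that by the inductive hypothesis is not in $\mathcal{G}$, so line~6 can only fire on $[\,[-n],\,P\,]$; if $P$ is not yet present we fall to line~10, split off $[-n]$, and recurse on the one-signed tuple $P=[\,[a_1],\,P\setminus\{a_1\}\,]$, which terminates immediately since both parts are already present, inserting $P\in\mathcal{A}^\ast$. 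For "$\supseteq$" I would exhibit, for each $[a_1,\dots,a_k]\in\mathcal{A}^\ast$, the witness node $[-n,a_1,\dots,a_k]$ with $n=\sum a_i$: it is independent, of order $k+1\le\nu$ and degree $2n\le D$, hence processed, and by the same $|N|=1$ analysis it forces the insertion of $[a_1,\dots,a_k]$.

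Counting is then immediate: a one-signed tuple of order $k$ with absolute-value sum $n$ is, up to the overall sign (the factor $2$), a partition of $n$ into exactly $k$ parts, so $\#\mathcal{A}_{\mathbb{T}}(\nu,D)=2\sum_{k=2}^{\nu-1}\sum_{n=1}^{M}\pi(k,n)$, which is the stated formula. For the limit, the $\overline{o}$-estimates quoted before the theorem give $\#\mathcal{V}_{\mathbb{T}}(\nu,D)=\#\mathcal{A}_{\mathbb{T}}(\nu,D)+\#\mathcal{K}_{\mathbb{T}}(\nu,D)\,(1+o(1))$, so the limit equals $a/(a+\kappa)$ with $a,\kappa$ the leading coefficients of $\#\mathcal{A}$ and $\#\mathcal{K}(\nu)$ as polynomials in $M$. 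Using $\pi(k,n)\sim n^{k-1}/(k!\,(k-1)!)$, the dominant ($k=\nu-1$) term yields $\#\mathcal{A}_{\mathbb{T}}(\nu,D)\sim \tfrac{2M^{\nu-1}}{(\nu-1)\,(\nu-1)!\,(\nu-2)!}$, and splitting a zero-sum tuple into a $p$-part positive piece, a $q$-part negative piece and some zeros gives $\#\mathcal{K}_{\mathbb{T}}(\nu,D)\sim \tfrac{M^{\nu-1}}{\nu-1}\sum_{p+q=\nu,\,p,q\ge1}\tfrac{1}{p!(p-1)!q!(q-1)!}$. Hence the limit is $\bigl(1+\tfrac12(\nu-1)!(\nu-2)!\sum_{p+q=\nu}\tfrac1{p!(p-1)!q!(q-1)!}\bigr)^{-1}$; writing $q=\nu-p$ and using $\tfrac{(\nu-1)!}{p!(\nu-p-1)!}=\binom{\nu-1}{p}$, $\tfrac{(\nu-2)!}{(p-1)!(\nu-p)!}=\tfrac1{\nu-1}\binom{\nu-1}{p-1}$ turns the inner sum into $\tfrac1{\nu-1}\sum_p\binom{\nu-1}{p}\binom{\nu-1}{p-1}=\tfrac1{\nu-1}\binom{2\nu-2}{\nu-2}=C_{\nu-1}$ by Vandermonde, giving exactly $1/(1+C_{\nu-1}/2)$.

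For part~(i) the same recursion applies after stripping the radial index (and, for $O(3)$, keeping $|m^t|\le l^t$, $\sum_t l^t$ even); since only an upper bound is needed, it suffices to observe that the $m$-components of an auxiliary node are still one-signed, so auxiliary nodes embed into (one-signed $m$-tuple)$\times$(radial/angular data), a set whose cardinality falls short of $\#\mathcal{K}_G(\nu,D)$ by one factor $D$ for $SO(2)$ and two for $O(3)$ — the extra factor coming from the coupling $|m^t|\le l^t$ — and a dimension count of these index sets (in the spirit of Theorem~\ref{results:th:3dasymp}) gives $O(1/D)$ and $O(1/D^2)$. The delicate point throughout is the "$\subseteq$" direction of the structural lemma: one must show the greedy "split off the largest one-particle factor" never manufactures a mixed-sign auxiliary, and this rests on two facts that have to be carried along the induction simultaneously — (a) whenever an independent invariant node has both a $\ge2$-element positive and a $\ge2$-element negative part, the decomposition into those two parts is already available at the moment the node is processed (this is where processing-by-increasing-order and the bookkeeping $n\le M$ enter), so line~6 pre-empts line~10; and (b) every competing decomposition of a node with a singleton part contains a mixed-sign, non-invariant, order-$\ge2$ component, which is provably absent from $\mathcal{G}$. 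Pinning down these quantifiers, and verifying that the final auxiliary set does not depend on the unspecified order in which equal-order nodes are visited, is where most of the care is required.
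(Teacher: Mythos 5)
Your treatment of part (ii) is substantially the same argument as the paper's, but formulated more cleanly: you replace the paper's loosely-stated inductive hypothesis with an explicit structural lemma $\mathcal{A}_{\mathbb{T}}(\nu,D)=\mathcal{A}^\ast$ (the set of one-signed tuples of order $2\le k\le\nu-1$ with $\sum|a_i|\le M$), and your ``$\supseteq$'' direction (the witness node $[-n,a_1,\dots,a_k]$) is exactly the justification the paper implicitly relies on but does not spell out. The counting and the Vandermonde manipulation leading to $C_{\nu-1}$ are correct. One point of care you flag but leave open — whether line 6 could ever fire on a mixed-sign decomposition — is precisely the content of the induction, and your argument (every mixed-sign non-leaf part fails to be invariant, is not a leaf, and is absent from $\mathcal{G}$ by the hypothesis) does close it. So (ii) is fine.

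Part (i) contains a genuine gap. The claim that ``the $m$-components of an auxiliary node are still one-signed'' is false for $SO(2)$ and $O(3)$. The greedy split in line 10 of Algorithm \ref{alg:original} removes the one-particle index $k^t$ with maximal $\|k^t\|_p = n^t + |m^t|$ (resp.\ $n^t+l^t$), not the one with maximal $|m^t|$, and these are different once a radial index is present. For instance, in the $SO(2)$ case with $\nu=3$, take $\bk=[(3,1),(0,1),(0,-2)]$ (pairs $(n^t,m^t)$). Then $\sum m^t=0$ and $\bk$ is independent, the one-particle degrees are $4,1,2$, so the greedy split removes $(3,1)$, and the inserted auxiliary node is $[(0,1),(0,-2)]$ — mixed-sign in $m$. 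So the ``embed into (one-signed $m$-tuple) $\times$ data'' picture does not describe $\mathcal{A}_{SO(2)}$ or $\mathcal{A}_{O(3)}$, and the reduction ``strip the radial index and rerun the $\mathbb{T}$ argument'' does not apply.

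Fortunately the one-signed restriction is also unnecessary: the bound in (i) already follows from the much cruder observation that every auxiliary node has correlation order $\le\nu-1$. The number of \emph{all} tuples of order $\nu-1$ and degree $\le D$ (no invariance, no sign condition) is $O(D^{2(\nu-1)})$ for $SO(2)$ and $O(D^{3(\nu-1)})$ for $O(3)$, because dropping one correlation order drops two (resp.\ three) ``degrees of freedom'' ($n,m$ resp.\ $n,l,m$), while $\#\mathcal{K}_{SO(2)}(\nu,D)=\Theta(D^{2\nu-1})$ and $\#\mathcal{K}_{O(3)}(\nu,D)=\Theta(D^{3\nu-1})$. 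This gives $O(1/D)$ and $O(1/D^2)$ directly, which is the route the paper actually takes. So for (i) you should discard the one-signedness claim and replace it by the order-$(\nu-1)$ count; the conclusion is the same but the stated reasoning, as it stands, does not hold.
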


\begin{remark}
    Note that expanding the Catalan numbers yields 
    $
        \frac{1}{1 + C_{\nu}/2} \sim 2 \sqrt{\pi} \nu^{3/2} 4^{-\nu} \text{as } \nu \to \infty, 
    $
    which suggests that at high correlation order relatively few auxiliary nodes are inserted. 

    However, this comes with a caveat: The double-limit $\lim_{D, \nu \to \infty}$ is likely ill-defined; that is, we expect that the balance between $D$ and $\nu$ as the limit is taken leads to different asymptotic behaviour.
\end{remark}

For the $SO(2)$ and $O(3)$ cases the foregoing theorem establishes that ``very few'' auxiliary nodes are required, at least at high polynomials degree. However, the case $G = \mathbb{T}$ highlights that there is space for further improvement. While we still see that relatively few auxiliary nodes are required at high degree {\em and} high correlation order, this is clearly not true in the pre-asymptotic regime. This would also be important in the $G = SO(2), O(3)$ cases if the balance between radial and angular basis functions is chosen different, i.e., if a relatively small radial basis were used. This motivates us to explore alternative algorithms. 
%
%
%
%
%
%
\begin{algorithm} 
    \SetAlgoLined
    \DontPrintSemicolon
    \caption{Generalized Insertion Heuristic
    \label{alg:modn}}
    
    \SetKwProg{Fn}{Function}{:}{}
    \Fn{Insert($\bk :: \text{node}, n :: \mathbb{N}$)}{
        \If{$\bk \in \mathcal{G}$}{
            \Return
        }
        
        \For{all decompositions $\bk \equiv [\bk_1, \bk_2]$}{
            \If{$\bk_1, \bk_2 \in \mathcal{G}$}{
                $\mathcal{G} \leftarrow \mathcal{G} \cup \{\bk \equiv [\bk_1, \bk_2]\}$  \\ 
                \Return
            }
        }
        
        Identify $\bk \equiv \large[[k^1, \dotsc, k^s]  , \bk'\large]$ where $s\leq \min(n,\, len(\bk) - 1)$ and $\|[k^1, \dotsc k^s]\|$ is maximal \\
        \textit{Insert}($[k^1, \dotsc, k^s]$, $1$) \tcp*{This node is inserted with Algorithm \ref{alg:original}}
        \textit{Insert}($\bk'$, $n$) \\ 
        \textit{Insert}($\bk$, $n$) \\
        \Return
    }
\end{algorithm}
\subsection{Generalized Insertion Heuristic}
Algorithm~\ref{alg:modn} is a generalization of Algorithm~\ref{alg:original}, allowing a cutoff of multielement subtuples with maximal degree of length no more than a parameter $n$. In this case the original scheme is the special case $n = 1$. Our main interest in this algorithm is that we can establish significantly improved asymptotic behaviour.
\begin{theorem}[Complexity of Algorithm \ref{alg:modn}]
    \label{theorem-alg2}
    The number of auxiliary nodes inserted by Algorithm \ref{alg:modn} with parameter $n \leq \nu/2$ scales as 
    \[
        \frac{\#\mathcal{A}_{G}(\nu, D)}{\#\mathcal{V}_{G}(\nu, D)} =
        \begin{cases}
            O(D^{1-n}), & G = \mathbb{T}, \\ 
            O(D^{1-2n}), & G = SO(2), \\ 
            O(D^{1 - 3n}), & G = O(3). \\ 
        \end{cases}
    \]
\end{theorem}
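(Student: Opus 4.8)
The plan is to reduce the statement to a single estimate on the number of auxiliary nodes, and then to prove that estimate from one structural observation about Algorithm~\ref{alg:modn} together with elementary lattice-point counting.

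First I would record the polytope volume asymptotics
\[
    \#\mathcal{K}_G(\nu, D) = \Theta\big(D^{c_G\nu - 1}\big)
    \qquad\text{and}\qquad
    \#\big\{\,\text{tuples of order } q,\ \text{degree} \le D\,\big\} = O\big(D^{c_G q}\big),
\]
where $c_{\mathbb{T}} = 1$, $c_{SO(2)} = 2$, $c_{O(3)} = 3$, and ``tuples of order $q$'' means the analogue of $\mathcal{K}_G(q,D)$ with the constraint $\sum_t m^t = 0$ (and, for $O(3)$, the parity of $\sum_t l^t$) dropped. Both follow from Ehrhart-type estimates for the relevant rational polytopes: the constraint $\sum_t m^t = 0$ is a codimension-one slice, which costs exactly one power of $D$, while the parity condition and the passage from ordered tuples to multisets only affect constants; the first estimate is in any case the one already underlying Theorem~\ref{results:th:3dasymp} and the statement $\#\mathcal{K}_G(\mu,D) = \overline{o}(\#\mathcal{K}_G(\nu,D))$ for $\mu < \nu$. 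Using $\mathcal{V}_G(\nu,D) = \mathcal{A}_G(\nu,D)\cup\bigcup_{\mu\le\nu}\mathcal{K}_G(\mu,D)$, the theorem reduces to proving
\[
    \#\mathcal{A}_G(\nu, D) = O\big(D^{c_G(\nu-n)}\big);
\]
indeed $c_G(\nu-n) = c_G\nu - c_G n \le c_G\nu - 1$ then gives $\#\mathcal{A}_G(\nu,D) = O(\#\mathcal{K}_G(\nu,D))$, hence $\#\mathcal{V}_G(\nu,D) = \Theta(D^{c_G\nu - 1})$, and the quoted ratio equals $O\big(D^{c_G(\nu-n) - (c_G\nu - 1)}\big) = O\big(D^{1 - c_G n}\big)$.

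The core is the structural claim: \emph{when Algorithm~\ref{alg:modn}, run with parameter $n\le\nu/2$, inserts all nodes of $\bigcup_{\mu\le\nu}\mathcal{K}_G(\mu,D)$, every auxiliary node it creates is a multi-index of correlation order at most $\nu - n$ and degree at most $D$.} The degree bound is immediate: submultisets of a degree-$\le D$ tuple have degree $\le D$, and the recursion strictly decreases the order except for the final re-insertion of $\bk$, which by then is dependent, so the process terminates and never creates a node of order exceeding $\nu$. For the order bound I would split the auxiliary nodes into (a) the split-off subtuples $[k^1,\dots,k^s]$ at the identification step and everything Algorithm~\ref{alg:original} spawns from them, which have order $\le s \le n \le \nu - n$; and (b) the complements $\bk'$ and everything produced when they are re-inserted with parameter $n$. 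For (b), whenever a node of order $\mu$ with $2\le\mu\le\nu$ is split, the split length is $s = \min(n,\mu-1)$, so either $\mu\ge n+1$ and the complement has order $\mu - n \le \nu - n$, or $\mu\le n$ and the complement has order $1 \le \nu - n$ (here $\nu\ge 2n\ge n+1$ is used). Moreover, since nodes are processed in increasing order of correlation, every invariant tuple of order $<\mu$ and degree $\le D$ is already in $\mathcal{G}$, so the complement actually inserted is non-invariant — a genuine auxiliary node — but still of order $\le\nu-n$. As $\mathcal{A}_G(\nu,D)$ is a \emph{set} of (generally non-invariant) valid multi-indices, the counting bound then yields
\[
    \#\mathcal{A}_G(\nu,D) \ \le\ \sum_{q=1}^{\nu-n} \#\big\{\,\text{tuples of order } q,\ \text{degree}\le D\,\big\} \ =\ \sum_{q=1}^{\nu-n} O\big(D^{c_G q}\big) \ =\ O\big(D^{c_G(\nu-n)}\big),
\]
which is exactly what is needed.

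I expect the obstacles to be bookkeeping rather than substance. One must fix the reading of the identification step: since every term contributing to $\|\cdot\|$ is nonnegative, a maximal-degree subtuple of length at most $\min(n,\mathrm{len}(\bk)-1)$ may always be chosen of length exactly $\min(n,\mathrm{len}(\bk)-1)$, and I would adopt that convention so the split length is determined. The role of the hypothesis $n\le\nu/2$ should be stated explicitly: it is precisely what makes the split-off subtuples (order $\le n$) no larger than the complements (order $\le\nu-n$), so that both families lie inside the order-$\le(\nu-n)$ slice; for $n>\nu/2$ the subtuples dominate and the method only gives $O(D^{1-c_G(\nu-n)})$. The remaining care is in the Ehrhart estimates — checking that the polytopes underlying $\mathcal{K}_G(\nu,D)$ and its constraint-free analogue are full-dimensional of the stated dimensions — which is routine and shared with the proof of Theorem~\ref{results:th:3dasymp}.
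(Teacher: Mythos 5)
Your proposal is correct and follows essentially the same route as the paper: bound $\#\mathcal{A}_G(\nu,D)$ by the count of all (not necessarily invariant) tuples of correlation order at most $\nu-n$ and degree at most $D$, which is $O(D^{c_G(\nu-n)})$, and divide by $\#\mathcal{V}_G(\nu,D)=\Theta(D^{c_G\nu-1})$. The only difference is that you spell out the structural claim that every auxiliary node has order at most $\nu-n$ (split-offs of order $\le n\le\nu-n$, complements of order $\mu-\min(n,\mu-1)\le\nu-n$), which the paper asserts implicitly via its ``all possible tuples of order $\nu-n$'' bound.
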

\subsection{Computational tests}
\label{sec:experimental}
\begin{figure}
    \centering
    \includegraphics[width = 0.5\textwidth]{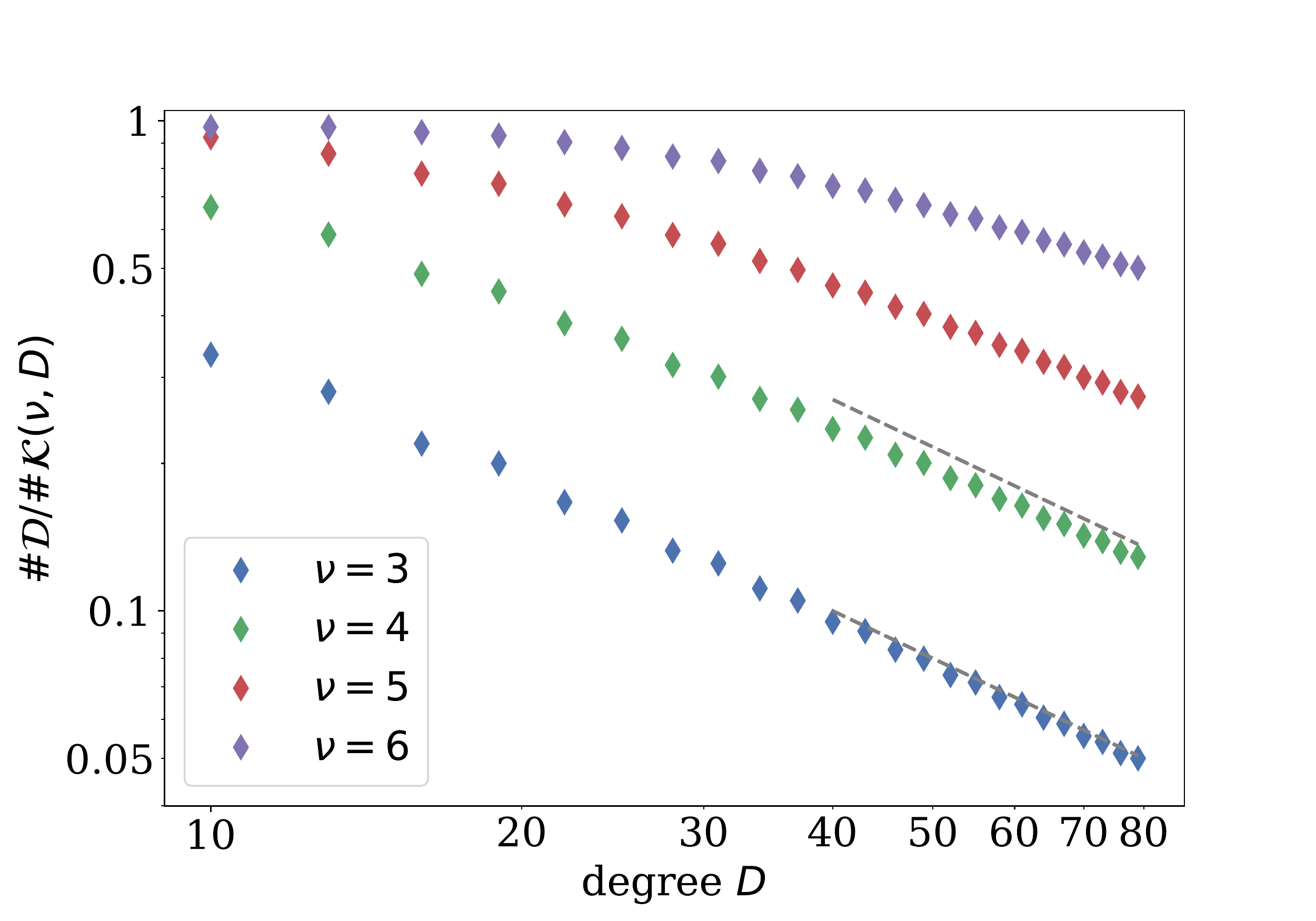}
    \caption{Ratio of dependent to total number of basis functions for the torus case, $G = \mathbb{T}$; dashed lines indicated $\sim 1/D$. }
    \label{fig:depT}
\end{figure}
\begin{figure}
    \centering
    \includegraphics[width = 1\textwidth]{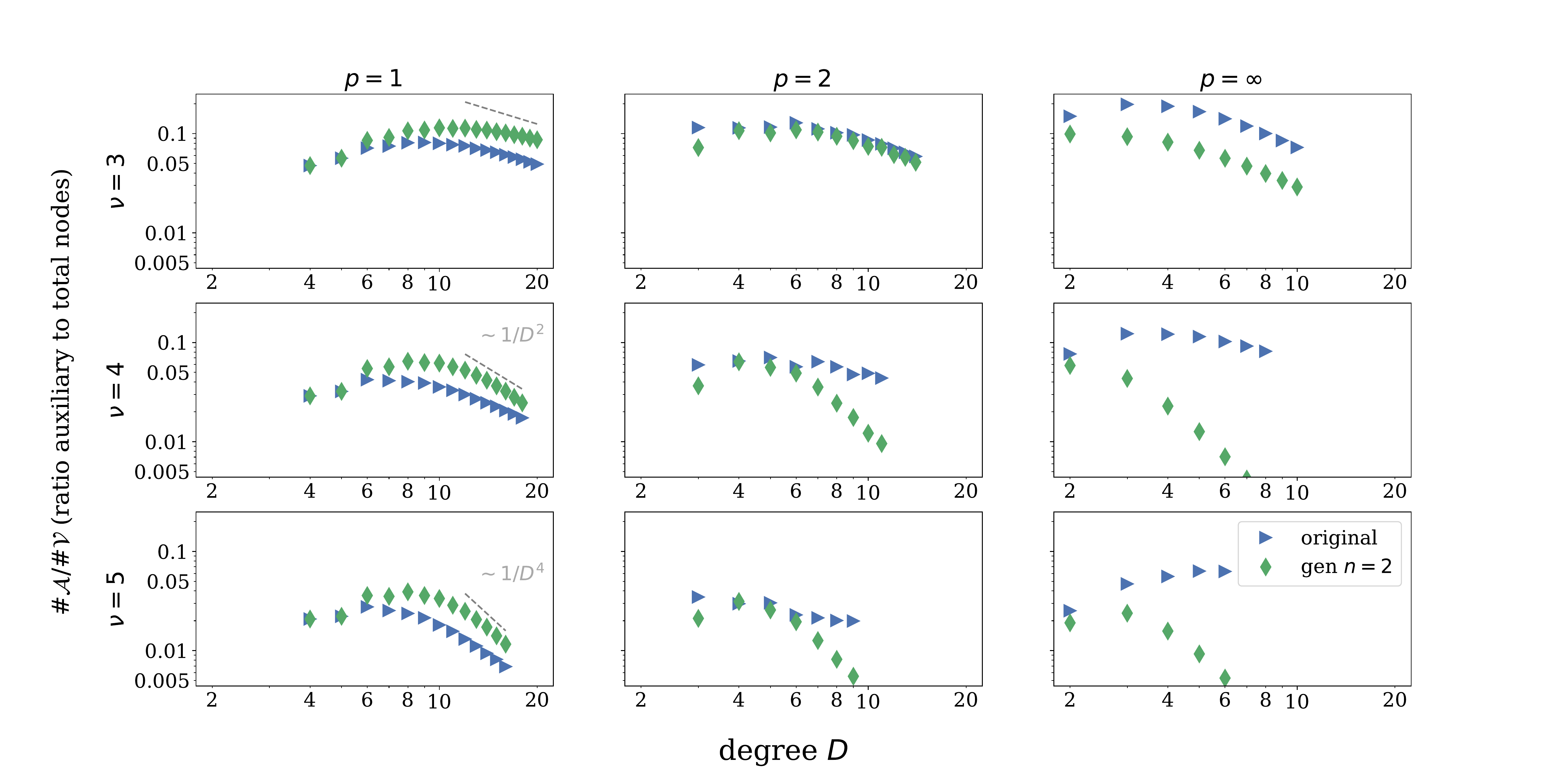}
    \caption{Explicit computation of $\frac{\text{\#}\mathcal{A}_{O(3)}(\nu, D)}{\text{\#}\mathcal{V}_{O(3)}(\nu, D)}$ in the preasymptotic regime;
    Algorithm~\ref{alg:modn} is used with parameter $n = 2$.}
    \label{fig:o3p1}
\end{figure}
\begin{figure}
    \centering
    \includegraphics[width = 0.8\textwidth]{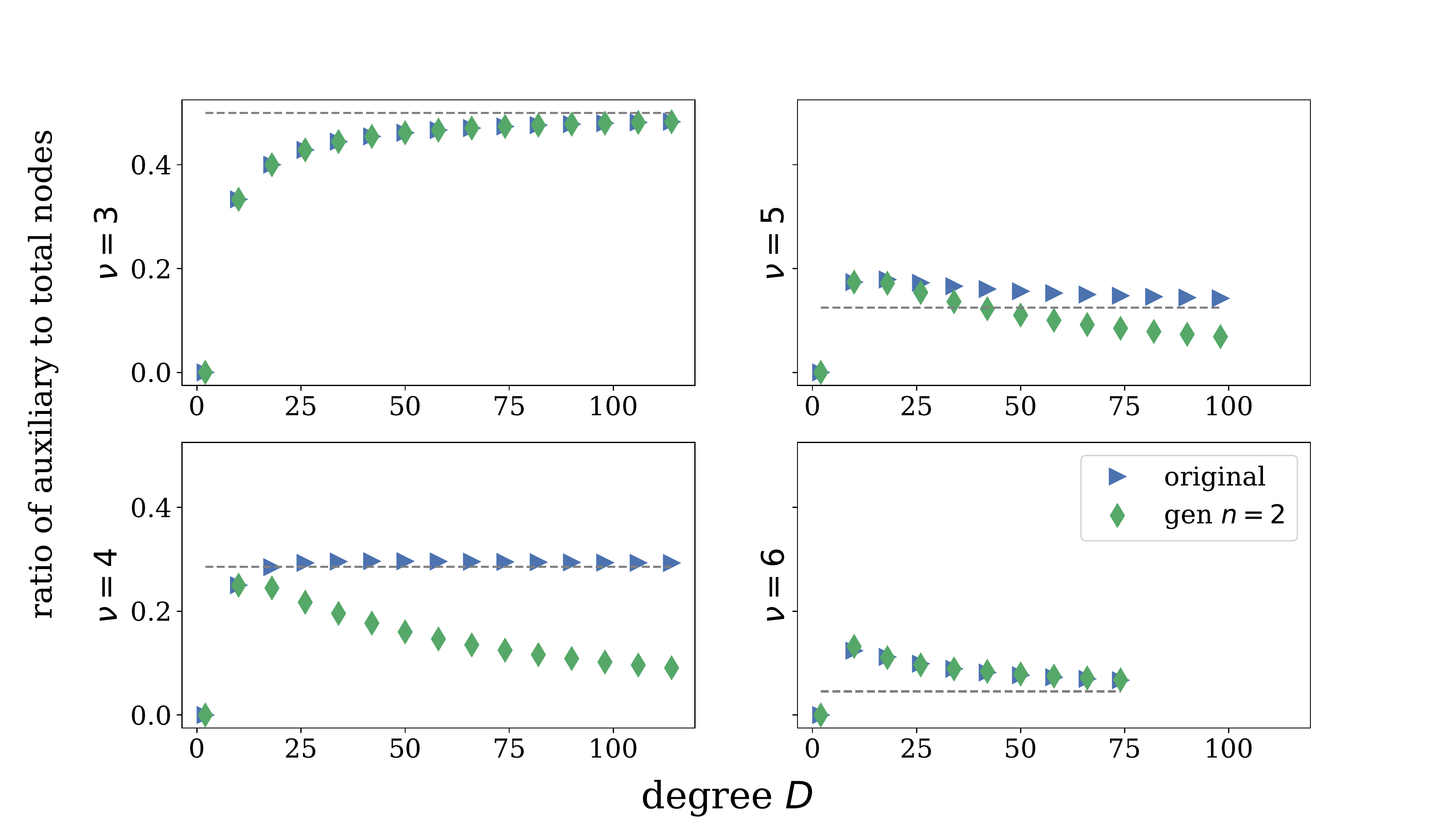}
    \caption{Explicit computation of $\frac{\text{\#}\mathcal{A}_{T}(\nu, D)}{\text{\#}\mathcal{V}_{T}(\nu, D)}$ in case $p = 1$; Algorithm~\ref{alg:modn} is used with parameter $n = 2$. The dotted line indicates the predicted limit of $\frac{\text{\#}\mathcal{A}_{T}(\nu, D)}{\text{\#}\mathcal{V}_{T}(\nu, D)}$ as $D \to \infty$ for Algorithm~\ref{alg:original}; cf. Theorem \ref{theorem-alg1} ($ii$). }
    \label{fig:tp1}
\end{figure}
We performed computational tests to confirm the predictions of Theorems~\ref{theorem-alg1} and~\ref{theorem-alg2}, as well as to expore the performance of our insertion algorithms in the preasymptotic regime, and for different notions of polynomial degree. To this end we generated the computational graphs for the groups $G = \mathbb{T}, O(3)$ for varying correlation order and polynomial degree and plotted the ratios $\frac{\text{\#}\mathcal{A}_{G}(\nu, D)}{\text{\#}\mathcal{V}_{G}(\nu, D)}$ of auxiliary versus total nodes. 

In Figure~\ref{fig:tp1} we show the results for the torus case, $G = \mathbb{T}$. In the case $\nu=3$ every independent node requires a unique auxiliary one to be inserted regardless of the insertion scheme. For $\nu = 4, 5$ we obtain a clear confirmation of our theoretical results. In particular we observe a clear improvement for $\nu = 4, 5$. Our result for $\nu = 6$ is still consistent with our theory, but the improvement of Algorithm~\ref{alg:modn} is no longer visible in the regime that we can easily reach in these tests, likely due to the fact that the asymptotic behaviour of Algorithm~\ref{theorem-alg1} is already very close to optimal. Moreover, Figure \ref{fig:depT} indicates that there are relatively few independent nodes in the pre-asymptotic regime at high correlation orders, which likely plays a role here as well.

In Figure~\ref{fig:o3p1} we compare the two insertion heuristics for the three-dimensional case $G = O(3)$. We show results for the total degree case case ($p = 1$) as well as for less sparse polynomial basis constructions, namely the cases $p = 2, \infty$ described in \S~\ref{sec:sparse_polys}. These are also interesting for applications but more difficult to tackle rigorously. For $p = 1$ we observe that, even though the generalized algorithm with $n > 1$ has far superior asymptotic behaviour than the original heuristic, the preasymptotic behaviour is in fact slightly worse. Specifically, one should use the generalized scheme only in the $D >> \nu$ regime. Note, however, that it is natural to expect from an algorithm with these superior asymptotics in $p=1$ to perform better for $p > 1$ as $\mathcal{K}^{p > 1}(\nu, D)$ contains nodes with $\|k\|_1 > D$. This intuition is also clearly confirmed by our tests shown in Figure~\ref{fig:o3p1}.
\section{Conclusions}
Our analysis provides another compelling argument for the outstanding performance of the atomic cluster expansion method for parameterising symmetric functions of many variables, introduced and further developed in \cite{drautz-ace,BachDussOrt2021,2020-pace1}.

Specifically, we presented a first analysis of an algorithm for generating a computational graph to efficiently evaluate symmetric polynomials in a format reminiscent of power sum polynomials where the ``basis lattice'' has holes that are due to different symmetry constraints. The key step is to understand the insertion of so-called ``auxiliary nodes'' into this graph which represent intermediate computational steps. Our two main results are (1) to explain and establish rigorously that the insertion scheme proposed in \cite{2020-pace1} is already asymptotically optimal in certain regimes (high degree and/or correlation order); and (2) to propose a generalized insertion algorithm with significantly improved asymptotic performance, as well as promising pre-asymptotic performance outside of the total-degree approximation regime. In \ref{sec:invariant_features} we briefly analyze the case when other invariant features (e.g. electric charge or atomic mass) are considered as well. 

An important next step will be to study the optimization of our algorithms for different architectures, in particular for GPUs
\section{Proofs}
    \label{sec:proofs} 

\subsection{Integer Partitions}
    The number of integer partitions of $D \in \mathbb{N}$ into exactly $\nu \in \mathbb{N}$ parts is denoted by $$\pi(\nu, D) = \text{\#}\Big\{\bk \in \mathbb{N}^{\nu} \,\Big|\,  k^1 \geq k^2 \geq \dotsc \geq k^{\nu}, \, \sum_{i = 1}^{\nu}k^i = D\Big\}$$
We further define $\pi(0, D) = 0$. It satisfies the bounds 
\begin{equation}
        \frac{1}{\nu!}\binom{D-1}{\nu-1} \leq {\pi(\nu, D)} \leq \frac{\left(D + \frac{\nu(\nu-1)}{2}\right)^{\nu-1}}{\nu!(\nu-1)!}.
\end{equation}
The upper bound can be found in \cite{1972-upperbound}.
The lower bound is obvious upon noticing that every partition of $D$ can be obtained at most $\nu!$ times placing bars: $D = [1 + 1] + [1] + [1 + 1 + 1 + 1] \cdots$. Both the lower and upper bounds can be viewed as polynomials of degree $\nu - 1$ with the single indeterminate $D$ and coefficients dependent on $\nu$. Therefore, 
\begin{equation}
    \forall \nu \in \mathbb{N}: \quad \pi(\nu, D) \sim \frac{D^{\nu - 1}}{\nu!(\nu - 1)!}, \quad \text{ as } D \to \infty.
\end{equation}

        \subsection{\texorpdfstring{$\mathbb{T}$}{T} - Invariance.}
            In an angular case only directional components of relative atom positions are considered. In the plane they are described using only complex exponents $\phi_{m^t}(\theta)  = e^{im^t\theta}$. Therefore, to satisfy rotational invariance $\sum_{t=0}^{\nu} m^t = 0$ for a $\nu$-correlation basis function $\Phi_{\bbm} = \prod_{t = 1}^{\nu}\phi_{m^t}$. But as we will notice later, $m$-channel coupling is the dominant contribution to the asymptotic behavior of the portion of independent nodes in the $SO(2)$ and $O(3)$ cases as well. 

We will consider \textit{slices} of $\mathcal{K}_{\mathbb{T}}$ with fixed $\|\bk\|_1$. For $\mu, D \in \mathbb{N}$ we define an $\mathcal{E}$-\textit{slice} as
    
\begin{gather*}
    \mathcal{E}_{\mathbb{T}}(\mu, D) := \Big\{\bk \in \mathcal{K}_{\mathbb{T}}(\nu, D) \,\Big|\,  \|\bk\|_1 = D \Big\}  \\  
    = \Big\{\bbm \in \mathbb{Z}^{\mu}\,\Big|\,  m^1 \geq \dotsc \geq m^{\mu}, \quad \sum_{i = 1}^{\mu} m^i = 0, \quad \sum_{i = 1}^{\mu} |m^i| = D, \quad m^i \neq 0\Big\}.
\end{gather*}

We observe some straightforward properties:
\begin{enumerate}
    \item if $\bbm = \{m^i\} \in \mathcal{K}(\nu, D)$ is such that  $ m^1, \dotsc,  m^k \geq 0$ and $m^{k + 1}, \dotsc, m^{\nu} \leq 0$ then $\sum_{i = 1}^{k} m^i = -\sum_{i = k + 1}^{\nu} m^i$;
    \item if $D$ is odd then $\forall \nu$ $\mathcal{E}_{\mathbb{T}}(\nu, D) = \emptyset$ as the previous property cannot be satisfied;
    \item $\text{\#}\mathcal{K}_{\mathbb{T}}(\nu, D) - \text{\#}\mathcal{K}_{\mathbb{T}}(\nu, D - 1) = \sum_{k = 1}^{\nu} \text{\#}\mathcal{E}_{\mathbb{T}}(k, D)$, where $\mathcal{E}_{\mathbb{T}}(k, D)$ represents tuples with $\nu - k$ zero elements and $k$ non-zero.
\end{enumerate}

\begin{lemma} For $\nu \geq 3$ the asymptotic behaviour of an $\mathcal{E}$-slice is given by
    $$ \text{\#}\mathcal{E}_{\mathbb{T}}(\nu, D) \sim \frac{D^{\nu - 2}}{2^{\nu - 2}[(\nu - 1)!]^2}\binom{2\nu - 2}{\nu - 2} \qquad \text{as } D \to \infty.$$
\end{lemma}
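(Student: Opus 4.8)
The plan is to reduce $\#\mathcal{E}_{\mathbb{T}}(\nu,D)$ to a convolution of integer‑partition counts, apply the asymptotics $\pi(j,D)\sim D^{j-1}/(j!\,(j-1)!)$ factor‑by‑factor, and then evaluate the resulting binomial sum by Vandermonde's identity.

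First I would split a tuple $\bbm\in\mathcal{E}_{\mathbb{T}}(\nu,D)$ according to the number $k$ of strictly positive entries. Since all $\mu=\nu$ entries are nonzero, ordered decreasingly, and $\sum_i m^i=0$, there is a well‑defined $k$ with $1\le k\le\nu-1$ such that $m^1\ge\dots\ge m^k>0>m^{k+1}\ge\dots\ge m^\nu$. By property (1) the positive entries sum to $D/2$ and the moduli of the negative entries also sum to $D/2$; in particular $D$ must be even — for odd $D$ the slice is empty by property (2), so the asymptotic is read along even $D$. Writing $D=2M$, the positive part is a partition of $M$ into exactly $k$ parts and the negative moduli form a partition of $M$ into exactly $\nu-k$ parts, and fixing $k$ this correspondence is a bijection; hence
\[
    \#\mathcal{E}_{\mathbb{T}}(\nu,2M)=\sum_{k=1}^{\nu-1}\pi(k,M)\,\pi(\nu-k,M).
\]

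Since the sum is finite, I would substitute $\pi(j,M)\sim M^{j-1}/(j!\,(j-1)!)$ into each factor, obtaining
\[
    \#\mathcal{E}_{\mathbb{T}}(\nu,2M)\sim M^{\nu-2}\sum_{k=1}^{\nu-1}\frac{1}{k!\,(k-1)!\,(\nu-k)!\,(\nu-k-1)!}\qquad\text{as }M\to\infty.
\]
It remains to evaluate the constant. Recognizing the $k$‑th summand as $[(\nu-1)!]^{-2}\binom{\nu-1}{k}\binom{\nu-1}{k-1}=[(\nu-1)!]^{-2}\binom{\nu-1}{k}\binom{\nu-1}{\nu-k}$, Vandermonde's identity gives $\sum_k\binom{\nu-1}{k}\binom{\nu-1}{\nu-k}=\binom{2\nu-2}{\nu}=\binom{2\nu-2}{\nu-2}$ (terms outside $1\le k\le\nu-1$ vanish). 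Substituting back and using $M^{\nu-2}=D^{\nu-2}/2^{\nu-2}$ produces exactly the claimed expression.

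Every step here is elementary; there is no genuine obstacle. The only mild subtlety I would flag explicitly is the parity restriction on $D$ (so that the convolution identity and the statement of the lemma are interpreted along even $D$), and the only nonroutine move is spotting the Vandermonde reduction of the constant, which is immediate once the summand is rewritten in binomial form.
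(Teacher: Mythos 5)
Your proposal is correct and mirrors the paper's own argument: the same decomposition by the number $k$ of positive entries, giving $\#\mathcal{E}_{\mathbb{T}}(\nu,D)=\sum_k\pi(k,D/2)\,\pi(\nu-k,D/2)$, followed by termwise substitution of the partition asymptotics and a Vandermonde evaluation of the resulting constant. The paper reorganizes the binomial algebra slightly (arriving at $\sum_k\binom{\nu}{k}^2 k(\nu-k)=\nu^2\binom{2\nu-2}{\nu-2}$, which is the same identity in disguise), and your explicit parity remark about even $D$ is a small clarification the paper leaves implicit.
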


\begin{proof}
    We will exploit property (1) and count the number of positive and negative element combinations separately. The index $k$ in the sum below indicates that a tuple $\{m^t\}_{t = 1}^{\nu}$ is considered with $m^1, \dotsc,  m^k > 0$ and $m^{k + 1}, \dotsc, m^{\nu} < 0$: 
    \begin{align}
        \text{\#}\mathcal{E}_{\mathbb{T}}(\nu, D) & = \sum_{k = 0}^{\nu}\pi\left(k, \frac{D}{2}\right)\pi\left(\nu - k, \frac{D}{2}\right)  \nonumber\\
        & \sim \sum_{k=0}^{\nu}\frac{\left(\frac{D}{2}\right)^{k-1}}{k!(k-1)!} \frac{\left(\frac{D}{2}\right)^{\nu - k-1}}{(\nu - k)!(\nu - k-1)!}  \nonumber\\
        & = \frac{D^{\nu - 2}}{2^{\nu - 2}(\nu!)^2}\sum_{k = 0}^{\nu}\frac{\nu!}{k!(\nu - k)!}\frac{\nu!\cdot k(\nu - k)}{k!(\nu - k)!}  \nonumber\\
        & = \frac{D^{\nu - 2}}{2^{\nu - 2}(\nu!)^2}\sum_{k = 0}^{\nu}\binom{\nu}{k}^2k(\nu - k) \nonumber\\
        & = \frac{D^{\nu - 2}}{2^{\nu - 2}(\nu!)^2}\nu^2\binom{2\nu - 2}{\nu - 2} = \frac{D^{\nu - 2}}{2^{\nu - 2}[(\nu - 1)!]^2}\binom{2\nu - 2}{\nu - 2}. \nonumber
        \qedhere 
    \end{align}
\end{proof}

Now, we can sum up slices to obtain the next lemma.

\begin{lemma} For $\nu, D \in \mathbb{N}$ the asymptotic behaviour of $\text{\#}\mathcal{K}_{\mathbb{T}}(\nu, D)$ is given by
    $$\text{\#}\mathcal{K}_{\mathbb{T}}(\nu, D) \sim \frac{D^{\nu - 1}}{2(\nu - 1) (2^{\nu - 2}[(\nu - 1)!]^2)}\binom{2\nu - 2}{\nu - 2} \qquad \text{as } D \to \infty.$$
    \label{lemma-T}
\end{lemma}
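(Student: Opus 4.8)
The plan is to obtain $\#\mathcal{K}_{\mathbb{T}}(\nu, D)$ by summing the $\mathcal{E}$-slice asymptotics from the previous lemma over the relevant range of degrees. Using property (3) from the list above, together with the fact (property (2)) that $\mathcal{E}_{\mathbb{T}}(k, D') = \emptyset$ whenever $D'$ is odd, we have the telescoping identity
\[
    \#\mathcal{K}_{\mathbb{T}}(\nu, D) = \sum_{\substack{D' \leq D \\ D' \text{ even}}} \sum_{k = 1}^{\nu} \#\mathcal{E}_{\mathbb{T}}(k, D').
\]
Here I use that $\#\mathcal{K}_{\mathbb{T}}(\nu, 0) = 1$ (only the empty-ish all-zero tuple, which contributes a lower-order constant) so the boundary term is negligible. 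The inner sum over $k$ is dominated, as $D' \to \infty$, by the top term $k = \nu$, since by the preceding lemma $\#\mathcal{E}_{\mathbb{T}}(k, D') \sim c_k (D')^{k-2}$ and the exponent $k - 2$ is maximized at $k = \nu$; the terms with $k < \nu$ contribute only $O((D')^{\nu - 3})$.

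Next I would carry out the summation over even $D' \leq D$. Writing $D' = 2j$, the leading contribution is
\[
    \sum_{\substack{D' \leq D \\ D' \text{ even}}} \frac{(D')^{\nu - 2}}{2^{\nu - 2}[(\nu-1)!]^2}\binom{2\nu - 2}{\nu - 2}
    = \frac{\binom{2\nu-2}{\nu-2}}{2^{\nu-2}[(\nu-1)!]^2} \sum_{j \leq D/2} (2j)^{\nu-2},
\]
and since $\sum_{j \leq M} j^{\nu-2} \sim M^{\nu-1}/(\nu-1)$ with $M = D/2$, this evaluates asymptotically to
\[
    \frac{\binom{2\nu-2}{\nu-2}}{2^{\nu-2}[(\nu-1)!]^2} \cdot 2^{\nu-2} \cdot \frac{(D/2)^{\nu-1}}{\nu-1}
    = \frac{D^{\nu-1}}{2^{\nu-1}(\nu-1)[(\nu-1)!]^2}\binom{2\nu-2}{\nu-2}.
\]
Rewriting $2^{\nu-1} = 2 \cdot 2^{\nu-2}$ gives exactly the claimed expression $\dfrac{D^{\nu-1}}{2(\nu-1)\,(2^{\nu-2}[(\nu-1)!]^2)}\binom{2\nu-2}{\nu-2}$.

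The main point requiring care is the control of error terms when passing from the asymptotic equivalence $\#\mathcal{E}_{\mathbb{T}}(\nu, D') \sim (\cdots)$ to a sum over $D'$: an asymptotic relation alone does not directly license term-by-term summation to give an asymptotic for the partial sums. To make this rigorous I would instead invoke the explicit two-sided polynomial bounds on $\pi(\nu, D')$ recorded at the start of the Proofs section, which give matching upper and lower bounds $\#\mathcal{E}_{\mathbb{T}}(\nu, D') = \frac{(D')^{\nu-2}}{2^{\nu-2}[(\nu-1)!]^2}\binom{2\nu-2}{\nu-2} + O((D')^{\nu-3})$ with an explicit (in $\nu$) error constant; summing the error term over $D' \leq D$ contributes $O(D^{\nu-2}) = o(D^{\nu-1})$, and the same is true of the $k < \nu$ contributions, so the leading term survives and the asymptotic follows. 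The remaining steps — the identity $\sum_{j \le M} j^{\nu-2} \sim M^{\nu-1}/(\nu-1)$ and the arithmetic simplification — are routine.
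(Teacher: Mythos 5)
Your proposal is correct and follows essentially the same route as the paper's proof: telescoping via property (3), restricting to even degrees, keeping the dominant $k=\nu$ slice term, and evaluating $\sum_{j\le D/2}(2j)^{\nu-2}$ to get the stated constant. Your added remark about replacing the termwise asymptotic equivalence with explicit two-sided polynomial bounds (uniform in $D'$) to justify summing the errors is a small but genuine tightening of the paper's argument, which performs the interchange implicitly.
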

\begin{proof}
    Notice that a slice $\mathcal{E}(\nu, D)$ does not include tuples  $\bbm \ni \{0\}$, hence, to obtain all tuples that contain exactly $j$ zeros we should consider $\mathcal{E}(\nu - j, D)$. Therefore, 
    \begin{align}
        \text{\#}\mathcal{K}_{\mathbb{T}}(\nu, D) 
        & = 1 + \sum_{h = 1}^{D}\sum_{k = 1}^{\nu}\text{\#}\mathcal{E}(k, h)  \nonumber \\
        & \sim \sum_{h = 0}^{D}\delta(h\text{ is even})\sum_{k = 0}^{\nu}\frac{h^{k - 2}}{2^{k - 2}[(k - 1)!]^2}\binom{2k - 2}{k - 2} \nonumber\\
        & \sim \sum_{h = 0}^{D}\delta(h\text{ is even})\frac{h^{\nu - 2}}{2^{\nu - 2}[(\nu - 1)!]^2}\binom{2\nu - 2}{\nu - 2} \nonumber\\
        & \sim \frac{D^{\nu - 1}}{2(\nu - 1) (2^{\nu - 2}[(\nu - 1)!]^2)}\binom{2\nu - 2}{\nu - 2}. \nonumber
    \end{align}
    here $\delta(h\text{ is even}) = 1$ if $h = 2k$ and $\delta(h\text{ is even}) = 0$ if $h = 2k + 1$. 
\end{proof}





The next theorem states that dependent nodes constitute a vanishing minority of all nodes in the regime $D \gg \nu$. 

\begin{theorem} 
    For $\nu \geq 3$, 
    $$\frac{\text{\#}\mathcal{D}_{\mathbb{T}}(\nu, D)}{\text{\#}\mathcal{K}_{\mathbb{T}}(\nu, D)} = \Theta\left(\frac{1}{D}\right) \qquad \text{as } D \to \infty. $$
\end{theorem}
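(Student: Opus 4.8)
The plan is to reduce the statement to a single claim about the numerator, namely that $\#\mathcal{D}_{\mathbb{T}}(\nu, D) = \Theta\!\big(D^{\nu-2}\big)$ as $D\to\infty$. Indeed, Lemma~\ref{lemma-T} gives $\#\mathcal{K}_{\mathbb{T}}(\nu,D)\sim c_\nu D^{\nu-1}$ with a constant $c_\nu>0$, so once the numerator estimate is established, dividing yields the asserted $\Theta(1/D)$ behaviour of the quotient at once.

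\emph{Lower bound.} I would single out the dependent nodes that contain a zero entry. Any $\bbm\in\mathcal{K}_{\mathbb{T}}(\nu,D)$ with some $m^t=0$ is dependent, because $[0]\in\mathcal{K}^{\rm inv}_{\mathbb{T}}$ and $\bbm=\big[\,[0],\,\bbm\setminus[0]\,\big]$. Deleting one copy of $0$ and, conversely, adjoining one copy of $0$ are mutually inverse bijections between $\{\bbm\in\mathcal{K}_{\mathbb{T}}(\nu,D): \bbm \text{ has a zero entry}\}$ and $\mathcal{K}_{\mathbb{T}}(\nu-1,D)$ (the degree $\|\cdot\|_1$ is unaffected). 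Hence this subfamily has cardinality exactly $\#\mathcal{K}_{\mathbb{T}}(\nu-1,D)\sim c_{\nu-1}D^{\nu-2}$ by Lemma~\ref{lemma-T} (which already applies at correlation order $\nu-1\ge 2$), and therefore $\#\mathcal{D}_{\mathbb{T}}(\nu,D)=\Omega(D^{\nu-2})$.

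\emph{Upper bound.} By definition every dependent $\bbm$ admits a decomposition $\bbm=[\bbm_1,\bbm_2]$ with $\bbm_i\in\mathcal{K}^{\rm inv}_{\mathbb{T}}$ of correlation order $\nu_i$, $\nu_1+\nu_2=\nu$, $1\le\nu_i\le\nu-1$, and $\|\bbm_1\|_1+\|\bbm_2\|_1=\|\bbm\|_1\le D$. Summing over all such data (which over-counts, but that is harmless for an upper bound),
\[
  \#\mathcal{D}_{\mathbb{T}}(\nu,D)\;\le\;\sum_{k=1}^{\nu-1}\ \sum_{h_1+h_2\le D} S(k,h_1)\,S(\nu-k,h_2),
\]
where $S(j,h):=\#\{\bbm'\in\mathcal{K}^{\rm inv}_{\mathbb{T}}:\nu(\bbm')=j,\ \|\bbm'\|_1=h\}$. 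Sorting $\bbm'$ according to its nonzero entries gives $S(j,h)=\sum_{i=0}^{j}\#\mathcal{E}_{\mathbb{T}}(i,h)$ for $h\ge1$ (property (3) of the $\mathcal{E}$-slices, with $\#\mathcal{E}_{\mathbb{T}}(0,h)=\#\mathcal{E}_{\mathbb{T}}(1,h)=0$ there), and the $\mathcal{E}$-slice lemma above, together with the trivial bound $\#\mathcal{E}_{\mathbb{T}}(2,h)\le1$, shows $\#\mathcal{E}_{\mathbb{T}}(i,h)=O(h^{i-2})$ for each fixed $i\ge2$; thus $S(j,h)=O(h^{j-2})$ for $j\ge2$, $h\ge1$, while $S(1,h)=0$ for $h\ge1$ and $S(j,0)=1$. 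The terms with $h_1=0$ or $h_2=0$ contribute at most $2\sum_{k=1}^{\nu-1}\#\mathcal{K}_{\mathbb{T}}(\nu-k,D)=O(D^{\nu-2})$; the terms with $h_1,h_2\ge1$ vanish unless $2\le k\le\nu-2$, and then, using $\sum_{h_1+h_2\le D}h_1^a h_2^b=O(D^{a+b+2})$,
\[
  \sum_{h_1+h_2\le D} h_1^{k-2}h_2^{\nu-k-2}=O\!\big(D^{(k-2)+(\nu-k-2)+2}\big)=O(D^{\nu-2}).
\]
Summing over the finitely many $k$ gives $\#\mathcal{D}_{\mathbb{T}}(\nu,D)=O(D^{\nu-2})$. (When $\nu=3$ the range $2\le k\le\nu-2$ is empty, matching the fact that a zero-free $3$-tuple summing to zero has no proper zero-summing sub-multiset, so $\mathcal{D}_{\mathbb{T}}(3,D)$ consists exactly of the tuples with a zero entry.) Combining the two bounds yields $\#\mathcal{D}_{\mathbb{T}}(\nu,D)=\Theta(D^{\nu-2})$, and dividing by Lemma~\ref{lemma-T} finishes the proof.

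The argument has no deep difficulty; the delicate point is the bookkeeping in the upper bound — identifying which pairs of correlation orders $(k,\nu-k)$ can actually occur for a zero-free summand, isolating the boundary cases $k\in\{1,\nu-1\}$ and $h_i=0$, and getting the exponent in the convolution sum $\sum_{h_1+h_2\le D}h_1^a h_2^b=\Theta(D^{a+b+2})$ right (it is easy to be off by one). It is also worth noting that the zero-free dependent nodes are themselves of order $D^{\nu-2}$, not of lower order, so they do affect the leading constant of $\#\mathcal{D}_{\mathbb{T}}(\nu,D)$ even though they are irrelevant to the $\Theta(1/D)$ conclusion.
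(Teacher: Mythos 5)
Your proof is correct and follows essentially the same route as the paper's: the lower bound comes from the tuples containing a zero entry (in bijection with $\mathcal{K}_{\mathbb{T}}(\nu-1,D)$), and the upper bound from summing products of slice counts over all decompositions, using $\#\mathcal{E}_{\mathbb{T}}(i,h)=O(h^{i-2})$ and the convolution estimate to get $O(D^{\nu-2})$ against the $\Theta(D^{\nu-1})$ denominator of Lemma~\ref{lemma-T}. Your handling of the boundary cases ($h_i=0$, $k\in\{1,\nu-1\}$) is in fact somewhat more careful than the paper's own display, which leaves the index $k$ informally outside its summation scope.
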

\begin{proof}
    The lower bound becomes obvious upon noticing that $\{ [0, \bbm] | \bbm \in \mathcal{K}_{\mathbb{T}}(\nu - 1, D) \} \subset \mathcal{D}_{\mathbb{T}}(\nu, D)$. 
    Since 
    \begin{equation}
        \text{\#}\mathcal{K}_{\mathbb{T}}(\nu - 1, D) \leq \text{\#}\mathcal{D}_{\mathbb{T}}(\nu, D) = \text{\#}\mathcal{K}_{\mathbb{T}}(\nu - 1, D) + \text{\#}\mathcal{D}^{\neq 0}_{\mathbb{T}}(\nu, D), 
    \end{equation}
    where $\mathcal{D}^{\neq 0}_{\mathbb{T}}(\nu, D) \subset \text{\#}\mathcal{D}_{\mathbb{T}}(\nu, D)$ is the set of dependent nodes that do not contain zero components, i.e. $\forall \bbm \in \mathcal{D}^{\neq 0}_{\mathbb{T}}(\nu, D): \, m^t \neq 0$.   The following inequality states that every dependent tuple can be split into two tuples of lower correlation order and degree. Equality is not satisfied due to the double counting caused by possible several separate decompositions of certain tuples    \Big(e.g. $[3, 2, 1, -1, -2, -3] = \big[[3, -3], \, [2, 1, -1, -2]\big] = \big[[2, -2], \,[3, 1, -1, -3]\big]$\Big):
    \begin{align*}
        \text{\#}\mathcal{D}^{\neq 0}_{\mathbb{T}}(\nu, D)
        &\leq \sum_{M = 0}^{D}\sum_{h = 0}^{M}\sum_{k = 0}^{\nu} 
            \text{\#}\mathcal{E}(k, h)\times \text{\#}\mathcal{E}(\nu - k, M - h)  \\ 
        &= O(D^2 (D^{k - 2}D^{\nu - k - 2}))  \\ 
        &= O(D^{\nu - 2}). \qedhere 
    \end{align*}
\end{proof}

        \subsection{\texorpdfstring{$O(3)$}{O(3)} and \texorpdfstring{$O(3)$}{O(2)} - Invariance.}
            We will only give proofs for the $O(3)$ invariant case, since the corresponding steps are completely analogous for the $SO(2)$ case. Recall that in the three-dimensional setting, the one-particle basis is given by $$\phi_{mln}(\br) = R_{n}(r)Y_{l}^{m}(\hat\br),$$
where $n \in \mathbb{N}, l \in \mathbb{N}$ and $m \in \mathbb{Z}$ with $|m| \leq l$. 
%

In the definition of $\mathcal{K}_{O(3)}$ we consider multisets of triplets $\big[(m^i, l^i, n^i)\big]_{i = 1}^{\nu}$ that can be written down as lexicographically ordered tuples of triplets. But we approach them from another perspective as triples of tuples $(\bbm, \bl, \bn)$. Note that with fixed $\bbm$ and $\bl$ some different permutations of $n$ can produce different elements of $\mathcal{K}_{O(3)}$. However, we estimate the number of tuples that satisfy all the above mentioned constraints but neglecting relative ordering of $\bbm$, $\bl$ and $\bn$, then if we mark the corresponding values with hats:

\begin{align*}
    \widehat{\mathcal{D}}_{O(3)}(\nu, D) &:= \Big\{(\bbm, \bl, \bn) \in  \big(\mathbb{Z}^{\nu}\big)^{3} \,\Big|\, m^1\geq \dotsc \geq m^{\nu},\, {\textstyle \sum_{t = 1}^{\nu} m^t = 0}, \quad {\textstyle \sum_{t=1}^{\nu}l^t} \text{ is even}, \nonumber\\
     & \hspace{7.7cm} {\textstyle |m^t| \leq l^t, \, \sum_{t = 1}^\nu n^t + l^t \leq D} \Big\}. \nonumber \\ 
     \widehat{\mathcal{K}}_{O(3)}(\nu, D) &:= \Big\{\bk \in  \widehat{\mathcal{K}}_{O(3)}(\nu, D)\,\Big|\, \bk \, \text{ is dependent} \Big\}. \nonumber
\end{align*}

Then, we have the bounds

\begin{align}
    \text{\#}\widehat{\mathcal{K}}_{O(3)}(\nu, D) &\leq  \text{\#}\mathcal{K}_{O(3)}(\nu, D) \leq (\nu!)^2\text{\#}\widehat{\mathcal{K}}_{O(3)}(\nu, D), \quad \text{and} \\
    \text{\#}\widehat{\mathcal{D}}_{O(3)}(\nu, D) &\leq  \text{\#}\mathcal{D}_{O(3)}(\nu, D) \leq (\nu!)^2\text{\#}\widehat{\mathcal{D}}_{O(3)}(\nu, D).
\end{align}

Next, we need the following technical lemma.
\begin{lemma} 
    For $\alpha, \beta \in \mathbb{N}$ we have 
    $$
         \sum_{k = 0}^{n}k^{\alpha}(n-k)^{\beta} \sim n^{\alpha + \beta + 1}\frac{\alpha!\beta!}{(\alpha + \beta + 1)!}
         \qquad \text{as } n \to \infty.
    $$
\end{lemma}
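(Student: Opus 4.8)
The plan is to prove the asymptotic $\sum_{k=0}^{n} k^{\alpha}(n-k)^{\beta} \sim n^{\alpha+\beta+1}\frac{\alpha!\beta!}{(\alpha+\beta+1)!}$ by recognizing the sum as a Riemann sum for the Beta integral. Writing $k = nx$, the sum $\sum_{k=0}^n k^\alpha (n-k)^\beta = n^{\alpha+\beta}\sum_{k=0}^n (k/n)^\alpha (1-k/n)^\beta$, and the factor $\frac{1}{n}\sum_{k=0}^n (k/n)^\alpha(1-k/n)^\beta$ converges to $\int_0^1 x^\alpha (1-x)^\beta \, dx = B(\alpha+1,\beta+1) = \frac{\alpha!\,\beta!}{(\alpha+\beta+1)!}$ since $\alpha,\beta$ are non-negative integers. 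Thus the whole sum is asymptotic to $n^{\alpha+\beta+1} \cdot \frac{\alpha!\beta!}{(\alpha+\beta+1)!}$.

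First I would make the substitution explicit and isolate $n^{\alpha+\beta+1}$ times a Riemann-type average. Because $x \mapsto x^\alpha(1-x)^\beta$ is continuous (indeed a polynomial) on $[0,1]$, standard convergence of Riemann sums gives $\frac{1}{n}\sum_{k=0}^{n} (k/n)^\alpha(1-k/n)^\beta \to \int_0^1 x^\alpha(1-x)^\beta\,dx$ as $n \to \infty$; one can be fully elementary here and avoid even invoking Riemann integration by expanding $(n-k)^\beta$ via the binomial theorem and using Faulhaber's formula $\sum_{k=0}^n k^j = \frac{n^{j+1}}{j+1} + O(n^j)$ for the resulting power sums, which yields the leading term $n^{\alpha+\beta+1}$ with coefficient $\sum_{j=0}^\beta \binom{\beta}{j}(-1)^j \frac{1}{\alpha+j+1}$. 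Either route produces the same constant.

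Then I would identify the constant: the Beta-function evaluation $\int_0^1 x^\alpha(1-x)^\beta \, dx = \frac{\Gamma(\alpha+1)\Gamma(\beta+1)}{\Gamma(\alpha+\beta+2)} = \frac{\alpha!\,\beta!}{(\alpha+\beta+1)!}$ is exactly the claimed constant. If one uses the binomial-expansion route instead, one verifies the combinatorial identity $\sum_{j=0}^\beta \binom{\beta}{j}\frac{(-1)^j}{\alpha+j+1} = \frac{\alpha!\,\beta!}{(\alpha+\beta+1)!}$, which is a classical Beta-integral identity (it follows from $\int_0^1 x^\alpha(1-x)^\beta\,dx$ expanded two ways, or by induction on $\beta$).

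There is no real obstacle here — this is a routine asymptotic calculation. The only mild care needed is to confirm that the endpoints $k=0$ and $k=n$ contribute negligibly (they contribute $O(n^{\max(\alpha,\beta)})$, which is lower order than $n^{\alpha+\beta+1}$ since $\alpha,\beta \geq 1$, or are simply absorbed since when $\alpha$ or $\beta$ is zero the formula still holds by direct check), and to make sure the error terms from Faulhaber's formula, when summed against the $O(1)$ binomial coefficients, remain $o(n^{\alpha+\beta+1})$. I would present the Riemann-sum argument as the clean main proof and perhaps remark on the binomial-identity alternative.
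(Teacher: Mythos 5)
Your proof is correct and follows exactly the same route as the paper: rewrite the normalized sum as a Riemann sum for $\int_0^1 x^\alpha(1-x)^\beta\,dx$ and evaluate the Beta integral as $\frac{\alpha!\,\beta!}{(\alpha+\beta+1)!}$. The additional remarks on the binomial/Faulhaber alternative and the endpoint terms are fine but not needed beyond the paper's argument.
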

\begin{proof}
    The result is a straightforward application of a Riemann sum convering to the associated integral, 
    \begin{align}
        \frac{\sum_{k = 0}^{n}k^{\alpha}(n-k)^{\beta}}{n^{\alpha + \beta + 1}} & = \sum_{k = 0}^{n}\left(\frac{k}{n}\right)^{\alpha} \left(1- \frac{k}{n}\right)^{\beta}\frac{1}{n} \xrightarrow[n\to \infty]{} \int_0^1x^{\alpha}(1-x)^{\beta}dx\nonumber\\
        &  = \frac{\Gamma(\alpha + 1)\Gamma(\beta + 1)}{\Gamma(\alpha + \beta + 2)} =
        \frac{\alpha!\beta!}{(\alpha + \beta + 1)!}. \nonumber \qedhere 
    \end{align}
\end{proof}

We can now establish the prevalance of independent nodes in the $O(3)$ and $SO(2)$ cases.

\begin{theorem}
    If $\nu \geq 3$, then
    $$
        \frac{\text{\#}\mathcal{D}_{SO(2)}(\nu, D)}{\text{\#}\mathcal{K}_{SO(2)}(\nu, D)} = \Theta\left(\frac{1}{D}\right) \qquad \text{and} 
        \qquad
        \frac{\text{\#}\mathcal{D}_{O(3)}(\nu, D)}{\text{\#}\mathcal{K}_{O(3)}(\nu, D)} = \Theta\left(\frac{1}{D}\right) \qquad \text{as } D \to \infty.
    $$

    \label{theory:th:3dasymp}
\end{theorem}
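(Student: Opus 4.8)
The plan is to mirror the $\mathbb{T}$-case argument: reduce to the order-forgetting (``hatted'') sets, pin down the polynomial growth rate of ``exact-degree'' slices, and then sandwich $\#\mathcal{D}_{O(3)}(\nu,D)$ between matching powers of $D$. I would carry out the $O(3)$ case; the $SO(2)$ case is identical after replacing every exponent $3$ below (there are three channels $n,l,m$ per element) by $2$ (two channels $n,m$), using the $SO(2)$-analogues of the hatted bounds. First, from $\#\widehat{\mathcal{K}}_{O(3)}\le\#\mathcal{K}_{O(3)}\le(\nu!)^2\#\widehat{\mathcal{K}}_{O(3)}$ and the corresponding double inequality for $\mathcal{D}_{O(3)}$, the ratio $\#\mathcal{D}_{O(3)}(\nu,D)/\#\mathcal{K}_{O(3)}(\nu,D)$ agrees with $\#\widehat{\mathcal{D}}_{O(3)}(\nu,D)/\#\widehat{\mathcal{K}}_{O(3)}(\nu,D)$ up to a factor in $[(\nu!)^{-2},(\nu!)^{2}]$, which is a constant since $\nu$ is fixed; so it suffices to treat the hatted quantities, i.e.\ I may ignore the relative ordering of the $\bbm,\bl,\bn$ channels throughout, losing only $\nu$-dependent constants.

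The heart of the matter will be the asymptotics of the exact-degree slices $\mathcal{E}_{O(3)}(\mu,D):=\{\bk\in\mathcal{K}^{\rm inv}_{O(3)}:\nu(\bk)=\mu,\ \|\bk\|_1=D\}$: I expect $\#\mathcal{E}_{O(3)}(\mu,D)=\Theta(D^{3\mu-2})$ for every $\mu\ge1$, whence $\#\mathcal{K}_{O(3)}(\mu,D)=\sum_{D'\le D}\#\mathcal{E}_{O(3)}(\mu,D')=\Theta(D^{3\mu-1})$. To count $\mathcal{E}_{O(3)}(\mu,D)$ one chooses $(\bl,\bn)$ with $\sum_t(n^t+l^t)=D$, ranging over a $(2\mu-1)$-dimensional simplex and contributing $\Theta(D^{2\mu-1})$; and for each such choice the admissible $\bbm$ are the lattice points of $\{\,\bbm:\ |m^t|\le l^t,\ \sum_t m^t=0\,\}$, a $(\mu-1)$-dimensional polytope of size $\Theta(D^{\mu-1})$ provided all $l^t$ are of order $D$ — the configurations with some $l^t$ of lower order contributing a strictly lower-order total — so the product is $\Theta(D^{3\mu-2})$. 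This lattice-point estimate, handling the $\bbm$-channel under the coupled constraint $|m^t|\le l^t$ together with $\sum_t m^t=0$ and verifying that the ``all $l^t\sim D$'' regime dominates, is the main obstacle; I would do it as in the lemma computing $\#\mathcal{E}_{\mathbb{T}}$ for the $\bbm$-channel (with finite thresholds $l^t$ replacing $\pm\infty$) and via the Riemann-sum lemma for the $(\bl,\bn)$-channels. In particular $\#\mathcal{E}_{O(3)}(1,D)=\Theta(D)$ is just the count of single invariant triples $(0,l,n)$ with $l$ even and $l+n=D$.

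It then remains to sandwich $\#\mathcal{D}_{O(3)}(\nu,D)$. For the lower bound: for any $\bk_1\in\mathcal{K}^{\rm inv}_{O(3)}$ of order $\nu-1$ and any single invariant triple $(0,l,n)$ with $\|\bk_1\|_1+l+n\le D$, the node $[\bk_1,(0,l,n)]$ lies in $\mathcal{D}_{O(3)}(\nu,D)$ (all $O(3)$-constraints and the degree bound are preserved and the decomposition is proper), and a given multiset arises from at most $\nu$ such pairs, so
\[
    \#\mathcal{D}_{O(3)}(\nu,D)\ \ge\ \tfrac{1}{\nu}\sum_{h_1+h_2\le D}\#\mathcal{E}_{O(3)}(\nu-1,h_1)\,\#\mathcal{E}_{O(3)}(1,h_2)\ =\ \Theta\Big(\sum_{h_1+h_2\le D}h_1^{3\nu-5}h_2\Big)\ =\ \Theta(D^{3\nu-2}),
\]
using $\sum_{h_1+h_2\le D}h_1^{a}h_2^{b}=\Theta(D^{a+b+2})$ (two applications of the Riemann-sum lemma). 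For the upper bound: every dependent node admits a proper decomposition $[\bk_1,\bk_2]$ with $\nu(\bk_1)=k\in\{1,\dots,\nu-1\}$ and degrees $h_1=\|\bk_1\|_1$, $h_2=\|\bk_2\|_1$, $h_1+h_2\le D$; assigning each dependent node one such decomposition and tolerating the overcount gives
\[
    \#\mathcal{D}_{O(3)}(\nu,D)\ \le\ \sum_{k=1}^{\nu-1}\sum_{h_1+h_2\le D}\#\mathcal{E}_{O(3)}(k,h_1)\,\#\mathcal{E}_{O(3)}(\nu-k,h_2)\ =\ O\Big(\sum_{k=1}^{\nu-1}\sum_{h_1+h_2\le D}h_1^{3k-2}h_2^{3(\nu-k)-2}\Big)=O(D^{3\nu-2}),
\]
each inner double sum being $\Theta(D^{3\nu-2})$ and $k$ ranging over a fixed finite set. (It is essential to slice by \emph{exact} degree here: $\mathcal{K}$-slices would only yield the vacuous $O(D^{3\nu})$, which is exactly why the $\mathbb{T}$-case proof uses $\mathcal{E}$-slices.) Combining, $\#\mathcal{D}_{O(3)}(\nu,D)=\Theta(D^{3\nu-2})$, hence $\#\mathcal{D}_{O(3)}(\nu,D)/\#\mathcal{K}_{O(3)}(\nu,D)=\Theta(1/D)$; the $SO(2)$ statement follows by the same argument with $3$ replaced by $2$.
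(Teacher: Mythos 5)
Your proposal is correct and reaches the right exponents, but it diverges from the paper in how the dependent nodes are counted, and the difference is worth noting. For the denominator you and the paper do essentially the same thing: pass to the order-forgetting (hatted) sets, slice by the degrees carried by the $\bbm$-, $\bl$- and $\bn$-channels, count $\bbm$ subject to $\sum_t m^t=0$ and $|m^t|\le l^t$, and collapse the sums (the paper pushes this through to a sharp constant via the Beta-integral Riemann-sum lemma, while you only extract $\Theta(D^{3\nu-1})$, which is all the theorem needs). For the numerator the paper repeats the same channel-by-channel computation but replaces the count of all zero-sum $\bbm$ by the count of \emph{dependent} $\bbm$ (those splittable into two zero-sum parts), which costs one power of $D$; note that ``node dependent $\Rightarrow$ $\bbm$ dependent'' only gives an upper bound, since a split of $\bbm$ need not respect the ``$\sum_t l^t$ even'' constraint on each part, and the paper does not spell out a matching lower bound. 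You instead transplant the paper's own $\mathbb{T}$-case argument: an upper bound by mapping each dependent node to a pair of exact-degree slices of invariant lower-order nodes, and a lower bound by appending an invariant $1$-correlation $(0,l,n)$ with $l$ even and controlling the at-most-$\nu$-fold overcount. This is arguably cleaner and more self-contained, since it needs only the order of magnitude $\#\mathcal{E}_{O(3)}(\mu,D)=\Theta(D^{3\mu-2})$ and supplies both bounds explicitly; its price is that you must justify that slice estimate (your sketch --- free choice of $\mu-1$ of the $m^t$ for the upper bound, restriction to configurations with all $l^t\gtrsim D/\mu$ for the lower bound --- does work), whereas the paper gets the slice asymptotics for free from its explicit computation.
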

\begin{proof}
    Suppose that $\deg(\bk) = \sum_{i = 1}^{\nu} l^i + \sum_{i = 1}^{\nu} n^i = H \leq D$, so let $L = \sum_{i = 1}^{\nu} l^i$ then $H - L = \sum_{i = 1}^{\nu} n^i$. Conceptually, 
    \begin{align}
        \label{eq:th:3dasymp:10}
        \text{\#}\widehat{\mathcal{K}}_{O(3)}(\nu, D) = \sum_{H=0}^{D}\sum_{L = 0}^{H}\text{\#}\Big\{\bbm : \sum |m^t| \leq L\Big\}\times\text{\#}\Big\{\bl\text{ that majorate } \bbm \text{ and } \sum l^t = L\Big\} \\ \times\text{\#}\Big\{\bn: \sum n^t = H - L\Big\}. \nonumber
    \end{align}
    Also suppose that we have $\{\bbm \in \mathbb{Z}^{\nu} : \sum m^t = 0, \, \sum |m^t| = M\}$ then to calculate the number of $\{\bl \in \mathbb{Z_+}^{\nu}: \sum l^t = L, \, l^t \geq |m^t|\}$, for every $\bbm$ we need to \textit{distribute} $L - M$ units over $\nu$ places as every $l^t$ is at least $|m^t|$. So if $\Lambda(\nu)$ and $\Lambda^{Dep}(\nu)$ are the corresponding asymptotic coefficients of the $\mathcal{E}$-slices of all and dependent nodes for the $\mathbb{T}$ case (i.e. \#$\mathcal{E}_{\mathbb{T}}(\nu, D) \sim \Lambda(\nu)\cdot D^{\nu-2}$). Then continuing on from \eqref{eq:th:3dasymp:10} we therefore get the $D \to \infty$ asymptotics
    
    \begin{align}
        \text{\#}\widehat{\mathcal{K}}_{O(3)}(\nu, D) & \sim \sum_{H = 0}^{D}\sum_{L = 0}^{H}\frac{1}{2}\left[\sum_{M=0}^{L}\left[\Lambda M^{\nu - 2}\times \frac{1}{2}\binom{L - M - 1}{\nu - 1}\right]\right]\frac{(H-L)^{\nu - 1}}{(\nu - 1)!\nu!}\nonumber\\
        & \sim \frac{\Lambda}{4(\nu-1)!^2\nu!}\sum_{H = 0}^{D}\sum_{L = 0}^{H}\left[\sum_{M=0}^{L}\left[M^{\nu - 2}(L-M)^{\nu - 1}\right]\right](H-L)^{\nu - 1}\nonumber\\ 
        & \sim \frac{\Lambda}{4(\nu-1)!^2\nu!}\frac{(\nu - 2)!(\nu-1)!}{(2\nu-2)!}\sum_{H = 0}^{D}\sum_{L = 0}^{H}L^{2\nu - 2}(H-L)^{\nu - 1}\nonumber\\
        & \sim \frac{\Lambda}{4(\nu-1)\nu!(2\nu-2)!}\frac{(2\nu-2)!(\nu - 1)!}{(3\nu - 2)!}\sum_{H = 0}^{D}H^{3\nu - 2}\nonumber\\
        & \sim \frac{\Lambda}{4(\nu-1)\nu(3\nu - 1)!}D^{3\nu - 1}\nonumber\\
        & = \frac{(2\nu - 2)!}{2^{\nu}(\nu-1)!^2\nu!^2(3\nu - 1)!}D^{3\nu - 1}. \nonumber
    \end{align}
    Similarly, 
    \begin{align}
        \text{\#}\widehat{\mathcal{D}}_{O(3)}(\nu, D) & = \sum_{H=0}^{D}\sum_{L = 0}^{H}\left[\text{\#}\Big\{\text{dependent } m\text{ - vectors}\Big\}\text{\#}\Big\{l\text{ - vectors that majorate}\Big\}\right]\pi(H - L, \nu)\nonumber \\
        & \sim \frac{\Lambda^{Dep}}{2(\nu-1)!^2\nu!}\sum_{H = 0}^{D}\sum_{L = 0}^{H}\frac{1}{2}\left[\sum_{M=0}^{L}\left[M^{\nu - 3}(L-M)^{\nu - 1}\right]\right](H-L)^{\nu - 1} \nonumber \\ 
        & \sim \frac{\Lambda^{Dep}}{4(\nu-2)(\nu-1)\nu(3\nu - 2)!}D^{3\nu - 2} = \frac{\Lambda^{Dep}}{\Lambda}\frac{(3\nu - 1)}{(\nu - 2)}\frac{1}{D} \text{\#}\widehat{\mathcal{K}}_{O(3)}(\nu, D). \nonumber
    \end{align}
    It is possible to use this scheme to  obtain $\widehat{\mathcal{K}}_{SO(2)} = \Theta(D^{2\nu-1})$ and $\widehat{\mathcal{D}}_{SO(2)} = \Theta(D^{2\nu-2})$.
\end{proof}

\subsection{Invariant features}

\label{sec:invariant_features}

We conclude our analysis with a brief remark on the case when particles are annotated with additional invariant features, such as chemical species, atomic mass, electric charge. For simplicity assume we have only one such additional feature, denoted by $\mu$, then the one-particle basis might take the form
\begin{equation}
    \phi_{nlmk}(\br, \mu) := R_n(r)Y_l^m(\hat\br)T_k(\mu),
\end{equation}
where $T_k$ is an additional polynomial basis with $k$ denoting the degree of $T_k$. The total degree of $\phi_{nlmk}$ is now defined as
\begin{equation}
    \deg(\phi_{nlmk}) = l + n + k.
\end{equation}
It turns out that in such a case the $1/D$ asymptotic ratio of the number of dependent to the number of total nodes is preserved:
\begin{theorem}
Denoting the corresponding sets by $\mathcal{K}^{f}_{O(3)}(\nu, D)$ and $\mathcal{D}^{f}_{O(3)}(\nu, D)$, we see that
    \[
        \frac{\text{\#}\mathcal{D}^{f}_{O(3)}(\nu, D)}{\text{\#}\mathcal{K}^{f}_{O(3)}(\nu, D)} = \Theta\left(\frac{1}{D}\right).
    \]
\end{theorem}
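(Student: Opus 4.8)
The plan is to mimic the asymptotic counting already carried out for $\mathcal{K}_{O(3)}$ and $\mathcal{D}_{O(3)}$ in Theorem~\ref{theory:th:3dasymp}, now with one extra tensor factor $T_k(\mu)$ contributing an additional nonnegative degree variable. As before, I would work with the ``hatted'' sets where the relative orderings of the $\bbm$, $\bl$, $\bn$, and the new feature-vector $\bk^{f} = (k^1,\dots,k^\nu)$ are ignored; the factor $(\nu!)^{3}$ relating $\widehat{\mathcal{K}}^{f}_{O(3)}$ to $\mathcal{K}^{f}_{O(3)}$ (and likewise for $\mathcal{D}^{f}$) is a fixed constant in $D$ and hence irrelevant to the $\Theta(1/D)$ claim, so it suffices to prove the estimate for the hatted sets.

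First I would split the degree budget $H = \deg(\bk) \leq D$ as $H = L + N + K$, where $L = \sum_t l^t$, $N = \sum_t n^t$, $K = \sum_t k^t$. The $\bn$-part contributes $\pi(\nu, N) \sim N^{\nu-1}/(\nu!(\nu-1)!)$ compositions and, identically, the $T_k$-part contributes $\sim K^{\nu-1}/(\nu!(\nu-1)!)$ compositions; these are exactly the same kind of factor that appeared once in the original $O(3)$ proof and now appears twice. The $(\bbm,\bl)$-part is handled verbatim as in Theorem~\ref{theory:th:3dasymp}: for a fixed $\bbm$ with $\sum|m^t| = M$ one distributes $L-M$ units over $\nu$ slots, giving $\binom{L-M-1}{\nu-1}$, and the number of admissible $\bbm$ with $\sum|m^t|=M$ is $\sim \Lambda(\nu) M^{\nu-2}$ for all nodes and $\sim \Lambda^{Dep}(\nu) M^{\nu-3}$ for the dependent ones (recall $\mathcal{E}_{\mathbb{T}}(\nu,D)\sim\Lambda(\nu)D^{\nu-2}$, and a dependent $m$-vector drops the exponent by one). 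Summing $M$ from $0$ to $L$ via the Riemann-sum lemma gives the $(\bbm,\bl)$-contribution $\sim c_\nu L^{2\nu-2}$ for all nodes and $\sim c_\nu' L^{2\nu-3}$ for dependent ones.

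The remaining work is then purely the iterated-sum asymptotics. For the numerator $\text{\#}\widehat{\mathcal{K}}^{f}_{O(3)}(\nu, D)$ I would evaluate, in order, $\sum_{L} L^{2\nu-2} N^{\nu-1}$ over $L+N$ fixed, then that against $K^{\nu-1}$, then sum over $H \leq D$; each application of the Riemann-sum lemma raises the exponent of the running variable by one and multiplies by an explicit Beta-function constant, yielding $\text{\#}\widehat{\mathcal{K}}^{f}_{O(3)}(\nu,D) = \Theta(D^{4\nu-1})$. The dependent count differs only in that the $(\bbm,\bl)$-factor carries exponent $2\nu-3$ instead of $2\nu-2$, so one fewer unit of degree is consumed there and consequently $\text{\#}\widehat{\mathcal{D}}^{f}_{O(3)}(\nu,D) = \Theta(D^{4\nu-2})$. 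Dividing gives the claimed $\Theta(1/D)$.

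The main obstacle, as in the original proof, is the double-counting in the upper bound for $\widehat{\mathcal{D}}^{f}_{O(3)}$: a dependent node may admit several distinct decompositions $\bk = [\bk_1,\bk_2]$, so summing over all splittings overcounts. The fix is the same two-sided argument: a clean lower bound comes from exhibiting an explicit injective family of dependent nodes (e.g. those of the form $[0,0,\bk']$ in the $(m,k)$-channels, or any node containing a proper invariant sub-tuple of bounded degree), which already has the same $D$-power as the overcounting upper bound; hence the overcount only affects the constant, not the exponent, and $\Theta(1/D)$ survives. I would also need to note that adding the feature channel cannot turn an independent node into a dependent one or vice versa in a way that changes the leading order — independence is still governed by whether the $m$-channel sum can be split, since the $n$- and $k$-channels impose no constraint — so the structural part of the argument transfers without change.
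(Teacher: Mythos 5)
Your proposal is correct and follows essentially the same route as the paper's own (much terser) proof: count via the hatted sets, split the degree budget over the $\bbm,\bl,\bn$ channels plus one extra factor of $\pi(\nu,K)$ for the feature channel, and conclude $\text{\#}\widehat{\mathcal{K}}^{f}_{O(3)}=\Theta(D^{4\nu-1})$ versus $\Theta(D^{4\nu-2})$ for the dependent nodes. Note that the paper's stated exponent $\Theta(D^{4\nu-1})$ for $\widehat{\mathcal{D}}^{f}_{O(3)}$ is evidently a typo for $\Theta(D^{4\nu-2})$; your version is the consistent one.
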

\begin{proof} 
It is possible estimate the number of nodes similarly as in in the theorem \ref{theory:th:3dasymp}:
    \begin{equation}
    \begin{aligned}
        \text{\#}\widehat{\mathcal{K}}^{f}_{O(3)}(\nu, D) = \sum_{H=0}^{D}\sum_{L = 0}^{H}\Big\{\bbm : \sum |m^t| \leq L\Big\}\Big\{\bl > \bbm : \sum l^t = L\Big\} \\
        \times\sum_{N=0}^{H-L}\Big\{\bn: \sum n^t = N\Big\}\Big\{\bk: \sum k^t = H - L - N\Big\} = \Theta(D^{4\nu - 1}),
    \end{aligned}
    \label{eq:invfeatures}
    \end{equation}
    if we additionally demand in (\ref{eq:invfeatures}) from nodes to be dependent, then additional constraints are applied to $\bbm$ and $\bl$  we see that $\text{\#}\widehat{\mathcal{D}}^{f}_{O(3)}(\nu, D) = \Theta(D^{4\nu - 1})$.
    \qedhere
\end{proof}  

\subsection{Complexity Analysis of Insertion Schemes}


\begin{proof}[Proof of Theorem \ref{theorem-alg1}: Complexity of Algorithm \ref{alg:original}]
Here $M$ = $\sum |m^j|$, $L = \sum l^j - |m^j|$, $N = \sum n^j$. Then, we have the bound 
    \begin{equation*}\begin{gathered}
        \#\mathcal{A}_{O(3)} \leq \sum_{\nu = 2}^{\nu_{max}}\Big\{\text{All possible tuples of order $\nu-1$}\Big\}\\ 
        = O\Bigg( ((\nu - 1)!)^2\sum_{M = \nu -1}^{D}2^{\nu - 1}\pi(M, \nu - 1)\sum_{L = 0}^{D - M}\pi(L, \nu - 1)\sum_{N = 0}^{D-M-N}\pi(N, \nu - 1)\Bigg)\\
        = O\Bigg(\Big(\sum_{n = 0}^{D}\pi(n, \nu - 1)\Big)^{3}\Bigg) = O\Big(\pi(D, \nu - 1)^3  D^3\Big) = O\Big(D^{3\nu - 3}\Big).
    \end{gathered}\end{equation*}
    We have already shown that $\mathcal{K}_{O(3)}(\nu, D) = \Theta\big(D^{3\nu - 1}\big)$, so 
    \begin{equation}
        \#\mathcal{V}_{O(3)}(\nu, D) = \Theta\Big( \sum_{k = 1}^{\nu}\#\mathcal{K}_{O(3)}(k, D) \Big) = \Theta\big(D^{3\nu - 1}\big).
    \end{equation}
    Analogously in the $SO(2)$ case we obtain $\#\mathcal{A}_{SO(2)} = O\big(D^{2\nu-2}\big)$ and $\mathcal{V}_{SO(2)}(\nu, D) = \Theta\big(D^{2\nu - 1}\big)$.
    
    The $\mathbb{T}$ case can be established by induction on $\nu$. First, suppose that $\nu = 3$, then we know that every independent node requires an auxiliary node to be inserted. Suppose that $\bbm = [m^1, -m^2, -m^3]$, where $m^t > 0$, then $m^1 = m^2 + m^3$, so $m^1$ has the largest absolute value, hence, the split by the Algorithm \ref{alg:original} will be $\bbm = {[m^1], [-m^2, -m^3]}$. Now we can conclude that all possible pairs $[m^1, m^2]$ and $[-m^1, -m^2]$ will be inserted. Therefore, 
    \begin{equation}
        \#\mathcal{A}_{\mathbb{T}}(\nu = 3, D) = 2\sum_{n = 1}^{\lfloor D/2 \rfloor}\pi(2, n).
    \end{equation}
    Now, suppose that $\nu \geq 4$, and all possible tuples $[m^t]_{t = 1}^{k}$ and $[-m^t]_{t = 1}^{k}$, where $k \leq \nu - 2$, are already inserted. Then if a tuple $\bbm = [m^t]_{t = 1}^{\nu}$ has $2 \leq k \leq \nu - 2$ positive values, so $m^{i_1} > 0, \dotsc, m^{i_k} > 0$ and $m^{i_{k+1}} < 0, \dotsc, m^{i_{\nu}} < 0$, then $\bbm$ can be inserted without any additional nodes. However, if there is only one positive or only one negative element $m^t$, then its absolute value is the largest in this tuple as $m^t = \sum_{s \neq t} m^s$, therefore, Algorithm \ref{alg:original} will insert $\bbm = \big[[m^t], [m^s]_{s \neq t}\big]$ and an additional node $[m^s]_{s \neq t}$, so
    \begin{equation}
        \#\mathcal{A}_{\mathbb{T}}(\nu, D) = \#\mathcal{A}_{\mathbb{T}}(\nu - 1, D) + 2\sum_{n = 1}^{\lfloor D/2 \rfloor}\pi(\nu - 1, n) = 2\sum_{k = 2}^{\nu - 1}\sum_{n = 1}^{\lfloor D/2 \rfloor} \pi(k, n).
    \end{equation}
    Considering the fact that $\pi(\nu, D) \sim \frac{D^{\nu - 1}}{\nu!(\nu - 1)!}$, we can conclude that 
    \begin{equation}
        \#\mathcal{A}_{\mathbb{T}}(\nu, D) \sim 2 \sum_{n = 1}^{\lfloor D/2 \rfloor} \frac{n^{\nu - 2}}{(\nu-1)!(\nu - 2)!} \sim \frac{D^{\nu-1}}{2^{\nu-2}(\nu-1)!^2},
    \end{equation}
    and using Lemma \ref{lemma-T} we obtain the statement of the theorem.
    \qedhere
\end{proof}


\begin{proof}[Proof of Theorem \ref{theorem-alg2}: Complexity of Algorithm \ref{alg:modn}]
We employ an analogous argument as in the previous proof to asses the computational complexity of Algorithm \ref{alg:modn}. As above, we can estimate 
    \begin{equation}\begin{gathered}
        \#\mathcal{A}_{O(3)} \leq \sum_{\nu = 2}^{\nu_{max}}\Big\{\text{All possible tuples of order $\nu-n$}\Big\}\\ 
        = O\Bigg( ((\nu - n)!)^2\sum_{M = \nu -1}^{D}2^{\nu - n}\pi(M, \nu - 1)\sum_{L = 0}^{D - M}\pi(L, \nu - n)\sum_{N = 0}^{D-M-N}\pi(N, \nu - n)\Bigg)\\
        = O\Bigg(\Big(\sum_{t = 0}^{D}\pi(t, \nu - n)\Big)^{3}\Bigg) = O\Big(\pi(D, \nu - n)^3  D^3\Big) = O\Big(D^{3\nu - 3n}\Big).
    \end{gathered}\end{equation}
Similarly for $SO(2)$ and $\mathbb{T}$ we have 
    \begin{equation*}
        \#\mathcal{A}_{SO(2)} = O\Bigg(\Big(\sum_{t = 0}^{D}\pi(t, \nu - n)\Big)^{2}\Bigg) = O\Big(D^{2\nu - 2n}\Big); \qquad \#\mathcal{A}_{\mathbb{T}} = O\Big(D^{\nu - n}\Big).
        \qedhere
    \end{equation*}
\end{proof}
        
    \printbibliography

@article{2020-pace1,
  title = {Performant implementation of the atomic cluster expansion (PACE): Application to copper and silicon},
  author = {Yury Lysogorskiy and Cas van der Oord and Anton Bochkarev and Sarath Menon and Matteo Rinaldi and Thomas Hammerschmidt and Matous Mrovec and Aidan Thompson and Gábor Csányi and Christoph Ortner and Ralf Drautz},
  journal = {npj Comp. Mat.},
  volume = {7},
  year = {2021},
}

@article{drautz-ace,
  title = {Atomic cluster expansion for accurate and transferable interatomic potentials},
  author = {Drautz, Ralf},
  journal = {Phys. Rev. B},
  volume = {99},
  pages = {014104},
  year = {2019},
  publisher = {American Physical Society},
}

@article{2019-ship1,
title = {Atomic cluster expansion: Completeness, efficiency and stability},
journal = {J. Comp. Phys.},
volume = {454},
pages = {110946},
year = {2022},
author = {Geneviève Dusson and Markus Bachmayr and Gábor Csányi and Ralf Drautz and Simon Etter and Cas {van der Oord} and Christoph Ortner},

}

@ARTICLE{Zuo2020-ba,
  title    = "Performance and Cost Assessment of Machine Learning Interatomic Potentials",
  author   = "Yunxing Zuo and Chi Chen and Xiangguo Li and Zhi Deng and Yiming Chen and J{\"o}rg Behler and G{\'a}bor Cs{\'a}nyi and Alexander Shapeev and Aidan Thompson and Mitchell Wood and Shyue Ping Ong",
  journal  = "J. Phys. Chem. A",
  volume   =  124,
  number   =  4,
  pages    = "731--745",
  year     =  2020,
}

@ARTICLE{Behler2007-ng,
  title    = "Generalized neural-network representation of high-dimensional
              potential-energy surfaces",
  author   = "Behler, J{\"o}rg and Parrinello, Michele",
  journal  = "Phys. Rev. Lett.",
  volume   =  98,
  number   =  14,
  pages    = "146401",
  year     =  2007,
  language = "en"
}

@ARTICLE{Bartok2010-mv,
  title    = "Gaussian approximation potentials: the accuracy of quantum
              mechanics, without the electrons",
  author   = "Albert Bart{\'o}k and Mike Payne and Risi Kondor  and
              G{\'a}bor Cs{\'a}nyi",
  journal  = "Phys. Rev. Lett.",
  volume   =  104,
  number   =  13,
  pages    = "136403",
  year     =  2010,
  language = "en"
}

@ARTICLE{Braams2009-bj,
  title     = "Permutationally invariant potential energy surfaces in high
               dimensionality",
  author    = "Bastiaan Braams and Joel Bowman",
  journal   = "Int. Rev. Phys. Chem.",
  publisher = "Taylor \& Francis",
  volume    =  28,
  number    =  4,
  pages     = "577--606",
  year      =  2009
}

@ARTICLE{Shapeev2016-pd,
  title     = "Moment Tensor Potentials: A Class of Systematically Improvable Interatomic Potentials",
  author    = "Alexander Shapeev",
  journal   = "Multiscale Model. Simul.",
  volume    =  14,
  number    =  3,
  pages     = "1153--1173",
  year      =  2016
}

@ARTICLE{Seko2019-fe,
  title     = "Group-theoretical high-order rotational invariants for
               structural representations: Application to linearized machine
               learning interatomic potential",
  author    = "Atsuto Seko and Atsushi Togo and Isao Tanaka",
  journal   = "Phys. Rev. B Condens. Matter",
  volume    =  99,
  number    =  21,
  pages     = "214108",
  year      =  2019
}

@ARTICLE{Nigam2020-pz,
  title   = "Recursive evaluation and iterative contraction of N-body
             equivariant features",
  author  = "Jigyasa Nigam and Sergey Pozdnyakov and Michele Ceriotti",
  journal = "J. Chem. Phys.",
  volume  =  153,
  number  =  12,
  pages   = "121101",
  year    =  2020
}

@ARTICLE{Musil2021-nv,
  title    = "{Physics-Inspired} Structural Representations for Molecules and
              Materials",
  author   = "Musil, Felix and Grisafi, Andrea and Bart{\'o}k, Albert P and
              Christoph Ortner and G{\'a}bor Cs{\'a}nyi  and Michele Ceriotti",
  journal  = "Chem. Rev.",
  volume   =  121,
  number   =  16,
  pages    = "9759--9815",
  year     =  2021,
}

@article{BachDussOrt2021,
  title = {Polynomial Approximation of Symmetric Functions},
  author = {Markus Bachmayr and Geneviève Dusson and Christoph Ortner},
  journal = {ArXiv e-prints},
  volume = {2109.14771},
  year = {2021},
}

@article{2021-acetb1,
  title = {Equivariant analytical mapping of first principles Hamiltonians to accurate and transferable materials models},
  author = {Liwei Zhang and Berk Onat and Geneviève Dusson and Gautam Anand and Reinhard J Maurer and Christoph Ortner and James R Kermode},
  journal = {ArXiv e-prints},
  volume = {2111.13736},
  year = {2021},
}

@article{1972-upperbound,
 author = {Aharon Gavriel Beged-Dov},
 journal = {SIAM J. Appl. Math.},
 number = {1},
 pages = {106--108},
 title = {Lower and Upper Bounds for the Number of Lattice Points in a Simplex},
 volume = {22},
 year = {1972}
}
\end{document}